\documentclass[a4paper,11pt]{article}

\usepackage{geometry}
\usepackage{amsmath,amssymb,amsthm,mathtools}
\usepackage{mathrsfs}
\usepackage{tikz}
\usetikzlibrary{arrows.meta}
\usepackage{subcaption}
\usepackage[colorlinks=true,linkcolor=blue,citecolor=blue,urlcolor=blue]{hyperref}
\usepackage{float}
\numberwithin{equation}{section}

\theoremstyle{plain}
\newtheorem{theorem}{Theorem}[section]
\newtheorem{lemma}[theorem]{Lemma}
\newtheorem{proposition}[theorem]{Proposition}
\numberwithin{equation}{section}

\theoremstyle{remark}
\newtheorem{remark}[theorem]{Remark}

\theoremstyle{definition}
\newtheorem{definition}[theorem]{Definition}

\begin{document}

\title{Low-regularity well-posedness for a mixed-sign quadratic Dirac equation on $N$-star metric graphs}
\author{Huichao Xing$^{1,2}$\quad Zhipeng Yang$^{2,3}$\thanks{Corresponding author:yangzhipeng326@163.com.}\\
{\small $^{1}$ Faculty of Education, Yunnan Normal University, Kunming, China}\\
{\small $^{2}$ Yunnan Key Laboratory of Modern Analytical Mathematics and Applications, Kunming, China.}\\
{\small $^{3}$ Department of Mathematics, Yunnan Normal University, Kunming, China}}
    
\date{}
\maketitle

\begin{abstract}
We study the Cauchy problem for a mixed-sign quadratic Dirac equation on a noncompact $N$--star metric graph $G$,
\[
\mathrm{i}\partial_t \psi = D\psi - \mathcal N(\psi),
\qquad
\psi(0)=\psi_0,
\]
where $\psi=(\psi_1,\psi_2)^{\mathsf T}:\mathbb{R}\times G\to\mathbb{C}^2$ and $D$ denotes the self-adjoint Dirac--Kirchhoff operator on $G$.
The nonlinearity acts edgewise and is given by a bilinear interaction between the positive and negative spectral parts,
\[
\mathcal N(\psi)=\mathcal B\bigl(\Pi_+\psi,\Pi_-\psi\bigr),
\]
where $\Pi_\pm$ are the spectral projections of $D$ and $\mathcal B$ is a fixed bilinear map on $\mathbb{C}^2$ applied componentwise on each edge.
This is a model quadratic interaction tailored to the mixed-sign Bourgain-space mechanism, rather than a general nonlinear Dirac equation on graphs.
Using Bourgain-type spaces associated with the spectral resolution of $D$ and a mixed-sign bilinear estimate on $N$--star graphs, we prove local well-posedness in the operator Sobolev space $H_D^s(G)$ for \(s>-\frac18\).
We also establish a blow-up alternative in $H_D^s(G)$ for the maximal forward lifespan.
\end{abstract}

\paragraph*{Keywords.}
Nonlinear Dirac equation, Star graph, Cauchy problem.

\paragraph*{2010 Mathematics Subject Classification.}
35Q41, 35A01, 81Q35.

\section{Introduction and Main Results}

The Dirac equation is a basic dispersive model in relativistic quantum mechanics, describing the time evolution of spin-$\frac12$ fields through a first-order system.
For background we refer to \cite{Thaller1992}.
In $\mathbb{R}^d$ the free flow can be written as
\[
i\partial_t\psi = D_m\psi,
\qquad
D_m = -i\alpha\cdot\nabla + m\beta,
\]
with Dirac matrices $\alpha=(\alpha_1,\dots,\alpha_d)$ and $\beta$.
Nonlinear Dirac dynamics arises when self-interactions are incorporated,
\[
i\partial_t\psi = D_m\psi - \mathcal N(\psi).
\]
Prominent examples are the Soler model \cite{Soler1970},
\[
\mathcal N(\psi)=g(\bar\psi\psi)\,\beta\psi,
\]
and the Gross--Neveu model \cite{GrossNeveu1974},
\[
\mathcal N(\psi)=g(\bar\psi\psi)\,\psi.
\]
The existence and qualitative properties of solitary waves and other nonlinear phenomena have been studied extensively, see for instance \cite{CazenaveVazquez1986}.

In one spatial dimension, quadratic interactions constitute another important class.
Machihara \cite{Machihara2005} introduced an $X^{s,b}$ framework tailored to the one-dimensional Dirac dispersion and obtained local well-posedness at the sharp regularity threshold $s>-\frac18$ for a family of quadratic nonlinearities.
The analysis relies on a mixed-sign bilinear estimate capturing the interaction between the positive and negative spectral components of the Dirac flow.
Related low-regularity results were later developed in \cite{MachiharaNakanishiTsugawa2010}.
More generally, Bourgain's Fourier restriction method \cite{Bourgain1993} underlies the $X^{s,b}$ theory, and an exposition in the context of dispersive PDE is given in \cite{TaoBilinearDispersive}.

Metric graphs, or quantum graphs, model wave propagation on networks by combining one-dimensional edge dynamics with vertex coupling conditions.
They support a broad class of dispersive equations, with the nonlinear Schr\"odinger equation as a prototypical example, and exhibit a strong dependence of dynamics on topology and vertex transmission.
We refer to \cite{BerkolaikoKuchment2013,Kuchment2008} for general background and to \cite{Noja2014,AdamiCacciapuotiFincoNoja2012} for nonlinear Schr\"odinger dynamics on star graphs and related settings.

Dirac operators on metric graphs have been studied from the perspective of self-adjoint realizations, spectral theory, and scattering, see for example \cite{BullaTrenkler1990,BolteHarrison2003}.
In the nonlinear direction, Borrelli, Carlone, and Tentarelli \cite{Borrelli2021} introduced a self-adjoint Dirac operator on noncompact metric graphs with Kirchhoff-type vertex conditions and investigated nonlinear Dirac flows, including a blow-up alternative.

Our goal is not to treat a general nonlinear Dirac equation on graphs, but to transfer to an $N$-star metric graph the specific mixed-sign quadratic mechanism behind Machihara's low-regularity theory on the line.
The vertex condition couples the edgewise dynamics, and the standard space--time Fourier transform on $\mathbb{R}\times\mathbb{R}$
is not available.
We define Bourgain-type spaces through the spectral resolution of the self-adjoint Dirac--Kirchhoff operator $D$ on the star graph
and establish a mixed-sign bilinear estimate for the interaction between the positive and negative spectral components.
The transfer from the one-dimensional Dirac evolution to the star-graph setting relies on the explicit spectral representation of $D$
and on the fact that the vertex coupling acts through a finite-dimensional unitary matrix on the edge index.
Accordingly, the equation considered here should be viewed as a quadratic model problem adapted to the $\Pi_+$/$\Pi_-$ interaction, rather than as a graph analogue of the physically standard Soler or Gross--Neveu equations.

\begin{figure}[H]
\centering
\begin{tikzpicture}[scale=1.1, line cap=round, line join=round, >=Stealth]
  \fill (0,0) circle (2pt);
  \node[below left] at (0,0) {$v$};
  \draw[thick,->] (0,0) -- (4,0);
  \node[above] at (2.2,0) {$e_1$};
\end{tikzpicture}
\caption{The $1$-star metric graph $G$}
\label{fig:N1star}
\end{figure}

\begin{figure}[H]
\centering
\begin{tikzpicture}[scale=1.1, line cap=round, line join=round, >=Stealth]
  \fill (0,0) circle (2pt);
  \node[below] at (0,0) {$v$};
  \draw[thick,->] (0,0) -- (4,0);
  \draw[thick,->] (0,0) -- (-4,0);
  \node[above] at (2.2,0) {$e_1$};
  \node[above] at (-2.2,0) {$e_2$};
\end{tikzpicture}
\caption{The $2$-star metric graph $G$}
\label{fig:N2star}
\end{figure}

\begin{figure}[H]
\centering
\begin{tikzpicture}[scale=1.1, line cap=round, line join=round, >=Stealth]
  \fill (0,0) circle (2pt);
  \node[below left] at (0,0) {$v$};

  \foreach \ang/\lab in {90/{$e_1$},18/{$e_2$},-54/{$e_3$},-126/{$e_4$},162/{$e_5$}} {
    \draw[thick,->] (0,0) -- (\ang:4);
    \node at (\ang:2.2) {\lab};
  }
\end{tikzpicture}
\caption{The $5$-star metric graph $G$}
\label{fig:N5star}
\end{figure}

Let $G$ be the noncompact $N$-star metric graph obtained by gluing $N$ copies of $\mathbb{R}_+$ at a common vertex; see Figures~\ref{fig:N1star}--\ref{fig:N5star}.
Let $m\ge 0$ and let $D$ be the Dirac--Kirchhoff operator acting edgewise by
\[
(D\psi)_e=\left(-i\sigma_1\partial_x+m\sigma_3\right)\psi_e
\quad\text{on }\mathbb{R}_+^{(e)},
\]
with vertex condition \eqref{eq:DK-vertex} and domain \eqref{eq:D-domain}.
For $s\in\mathbb{R}$ we work in the operator Sobolev space $H_D^s(G)=\mathrm{Dom}(\langle D\rangle^s)$ endowed with the norm \eqref{eq:HDs}.
We study the Cauchy problem
\begin{equation}\label{eq:NLDE}
\begin{cases}
i\partial_t\psi = D\psi - \mathcal N(\psi), & (t,x)\in\mathbb{R}\times G,\\
\psi(0,x)=\psi_0(x), & x\in G,
\end{cases}
\end{equation}
where the nonlinearity acts edgewise and is given by a bilinear interaction between the positive and negative spectral parts,
\[
\mathcal N(\psi)=\mathcal B\bigl(\Pi_+\psi,\Pi_-\psi\bigr).
\]
Throughout the paper, $\mathcal B:\mathbb C^2\times\mathbb C^2\to\mathbb C^2$ is a fixed bilinear map applied componentwise on each edge.
Since the underlying space is finite-dimensional, there exists $C_{\mathcal B}>0$ such that
\[
|\mathcal B(a,b)|\le C_{\mathcal B}|a|\,|b|,
\qquad a,b\in\mathbb C^2.
\]
Typical admissible examples are
\[
\mathcal B_1(a,b)=\begin{pmatrix} a_1b_1\\ a_2b_2\end{pmatrix},
\qquad
\mathcal B_2(a,b)=\begin{pmatrix} a_1b_2\\ a_2b_1\end{pmatrix},
\qquad
\mathcal B_3(a,b)=\langle Ma,b\rangle_{\mathbb C^2}\,v,
\]
with fixed $M\in\mathbb C^{2\times 2}$ and $v\in\mathbb C^2$.
These examples illustrate the class covered by the analysis; no specific physical interpretation is assumed for $\mathcal B$, and the role of the structure $\mathcal B(\Pi_+\psi,\Pi_-\psi)$ is to isolate the mixed-sign interaction for which Machihara's bilinear estimate applies.

\begin{theorem}\label{thm:lwp}
Let $s>-\frac18$ and let $\psi_0\in H_D^s(G)$.
There exist $T=T(\|\psi_0\|_{H_D^s(G)})>0$ and $b\in\left(\frac12,1\right)$ such that \eqref{eq:NLDE} admits a unique solution
\[
\psi\in C([0,T];H_D^s(G))\cap X_T^{s,b}.
\]
The data-to-solution map is locally Lipschitz from $H_D^s(G)$ to $C([0,T];H_D^s(G))$.
Let $[0,T^\ast)$ be the maximal forward lifespan of $\psi$ in $C([0,T);H_D^s(G))$.
If
\[
\sup_{t\uparrow T^\ast}\|\psi(t)\|_{H_D^s(G)}<\infty,
\]
then there exists $\delta>0$ such that $\psi$ extends to a solution on $[0,T^\ast+\delta)$.
\end{theorem}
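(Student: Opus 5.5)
The plan is a standard Picard iteration in Bourgain spaces $X^{s,b}$ built from the spectral resolution of $D$. The main new ingredient is a mixed-sign bilinear estimate on $G$, which I will obtain by transferring Machihara's estimate on $\mathbb{R}$.

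\textbf{Step 1: the function spaces.} Let $E_D(d\lambda)$ be the spectral measure of $D$. For $u$ on $\mathbb{R}\times G$, take the time Fourier transform $\tilde u(\tau)$ and define
\[
\|u\|_{X^{s,b}}^2=\int_{\mathbb{R}}\int \langle\lambda\rangle^{2s}\langle\tau-\lambda\rangle^{2b}\,d\langle E_D(\lambda)\tilde u(\tau),\tilde u(\tau)\rangle\,d\tau .
\]
Equivalently, $\|u\|_{X^{s,b}}=\|e^{itD}u\|_{H^b_t H^s_D}$. Let $X_T^{s,b}$ be the corresponding restriction space.

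Since the definition only uses the group $e^{-itD}$ and the self-adjointness of $D$, the usual linear estimates follow verbatim from the abstract Hilbert-space argument, for a time cutoff $\eta$ and $\tfrac12<b\le b'<1$:
\[
\|\eta(t)e^{-itD}\psi_0\|_{X^{s,b}}\lesssim\|\psi_0\|_{H^s_D},
\qquad
\Bigl\|\eta(t/T)\int_0^t e^{-i(t-t')D}F(t')\,dt'\Bigr\|_{X^{s,b}}\lesssim T^{1-b'+b}\,\|F\|_{X^{s,b'-1}}.
\]
In addition, $X^{s,b}\hookrightarrow C(\mathbb{R};H^s_D)$ for $b>\tfrac12$.

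\textbf{Step 2: the bilinear estimate (main obstacle).} I need to show
\[
\|\mathcal B(\Pi_+u,\Pi_-v)\|_{X^{s,b'-1}}\lesssim\|u\|_{X^{s,b}}\|v\|_{X^{s,b}}
\qquad\text{for } s>-\tfrac18.
\]
To transfer the problem to the line, I will use the explicit generalized eigenfunctions of the Dirac--Kirchhoff operator. On each edge they are combinations $\sum_\pm a_\pm(k)\,e^{\pm ikx}w_\pm(k)$ of plane waves, with $\lambda=\pm\sqrt{k^2+m^2}$. The amplitudes are coupled only through a fixed $N\times N$ unitary scattering matrix acting on the edge index, and are bounded uniformly in $k$.

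Consequently, writing $\Pi_\pm u$ on edge $e$ as the restriction to $\mathbb{R}_+$ of a finite sum of whole-line functions, each $\Pi_\pm u|_e$ becomes a finite combination of restrictions of functions $U_j\in X^{s,b}_{\pm}(\mathbb{R})$. Here the spaces $X^{s,b}_{\pm}(\mathbb{R})$ are Machihara's spaces, with weight $\langle\tau\mp\langle\xi\rangle_m\rangle$, possibly after the reflection $x\mapsto-x$, and $\sum_j\|U_j\|\lesssim\|u\|_{X^{s,b}(G)}$.

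The product $\mathcal B(\Pi_+u,\Pi_-v)$ is then edgewise a finite sum of restrictions of whole-line mixed-sign products. Machihara's estimate bounds each of these in $X^{s,b'-1}$ on $\mathbb{R}$ for $s>-\frac18$.

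Two delicate points remain:
\begin{itemize}
\item Returning to the graph requires a dual transference. I would bound the $X^{s,b'-1}(G)$ norm by testing against $w\in X^{-s,1-b'}(G)$, and decompose $w$ the same way. The estimate then reduces to trilinear whole-line integrals over $\mathbb{R}_+$.
\item The restriction to the half-line is harmless only after multiplying by the indicator $\mathbf 1_{x>0}$, which is not smooth. I expect to avoid it by a reflection/odd-even extension: each edge product is extended to the whole line in a way compatible with the spectral decomposition. If that fails, I would use that $\mathbf 1_{x>0}$ is bounded on $H^\sigma$ for $|\sigma|<\tfrac12$, which covers $s>-\frac18$ and $1-b'$ near $\tfrac12$, for the $X^{s,b}$ norms restricted in space.
\end{itemize}
The same argument also controls the cross terms produced by reflected waves, since they have the same dispersion relation.

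\textbf{Step 3: contraction and blow-up alternative.} Set
\[
\Phi(\psi)=\eta(t)e^{-itD}\psi_0+i\eta(t/T)\int_0^te^{-i(t-t')D}\mathcal N(\psi)\,dt'.
\]
Since $\mathcal N(\psi)-\mathcal N(\phi)=\mathcal B(\Pi_+(\psi-\phi),\Pi_-\psi)+\mathcal B(\Pi_+\phi,\Pi_-(\psi-\phi))$, Steps 1--2 give
\[
\|\Phi\psi-\Phi\phi\|_{X^{s,b}}\lesssim T^{\theta}\bigl(\|\psi\|_{X^{s,b}}+\|\phi\|_{X^{s,b}}\bigr)\|\psi-\phi\|_{X^{s,b}},
\qquad \theta=1-b'+b>0 .
\]
Choosing $T\sim\|\psi_0\|_{H^s_D}^{-1/\theta}$ makes $\Phi$ a contraction on a ball. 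This yields existence, Lipschitz dependence of the data-to-solution map, and uniqueness in $X_T^{s,b}$ via the standard restriction-norm argument.

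For the blow-up alternative, suppose $\sup_{t<T^*}\|\psi(t)\|_{H^s_D}\le M$. The local time depends only on $M$, so restarting the flow at $T^*-T(M)/2$ extends the solution beyond $T^*$. The extension is glued to the original by uniqueness, and $X^{s,b}$ is stable under such gluing for $b<1$ via the cutoff.
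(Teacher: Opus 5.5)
You follow the paper's architecture: spectral $X^{s,b}$ spaces, free and Duhamel estimates, transfer of Machihara's mixed-sign estimate, contraction, and restart. The gap is that the step carrying the theorem, the bilinear estimate on $G$, is not proved. After reducing the inputs to whole-line pieces, you leave the return to the graph as two open ``delicate points'' with conditional remedies.

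\textbf{The fallback fails as stated.} Boundedness of $\mathbf 1_{x>0}$ on $H^\sigma_x$ for $|\sigma|<\frac12$ does not transfer to a modulation-weighted space, because the cutoff does not commute with $e^{-itD}$. Cutting a smooth localized free wave $g$ at $x=0$ produces a piece $\approx \widehat{g(\cdot,0)}(\tau)/(i\xi)$ with $\tau$ essentially bounded, i.e.\ at modulation $\sim|\xi|$. Its $X^{\sigma,\beta}$ norm is finite only if $\sigma+\beta<\frac12$. In your dual pairing $\sigma=-s>0$ and $\beta=1-b'$, so ``$1-b'$ near $\frac12$'' is exactly the wrong regime. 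You would need $b'>\frac12-s$, compatibility with Machihara's constraints, and an actual proof of the multiplier bound in that range. The obstruction is structural: for $N\ge2$ a componentwise product of two spinors obeying \eqref{eq:DK-vertex} generally violates $\sum_e\chi_e(0)=0$. Hence an estimate of this type can only hold for restricted $(s,b_0)$; it fails, e.g., at $s=1$, where the product is not in $H^{1/2}_D$. A proof must detect this.

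\textbf{How the paper's route compares.} Your first option is essentially the paper's route, made concrete as follows:
\begin{enumerate}
\item Rotate the edge index by a unitary with first row $N^{-1/2}(1,\dots,1)$. This turns \eqref{eq:DK-vertex} into one half-line with $\chi(0)=0$ and $N-1$ half-lines with $\varphi(0)=0$.
\item Extend by $z\mapsto\kappa\sigma_3 z(-x)$ on $x<0$. A plain even or odd extension does not intertwine $-i\sigma_1\partial_x+m\sigma_3$; this twisted one intertwines $D_\kappa$ with $D_{\mathbb R}$ exactly.
\item Consequently the $\Pi_\pm$ inputs go isometrically into Machihara's $X^{s,b}_\pm(\mathbb R)$. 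No generalized eigenfunctions or ($k$-dependent, when $m>0$) reflection coefficients are needed. The modes recombine through $c_{jk\ell}=\sum_e U_{je}\overline{U_{ke}}\,\overline{U_{\ell e}}$.
\end{enumerate}
On the output side, however, the componentwise product of two such extensions is even or odd (parity $\kappa_k\kappa_\ell$). The $D_{\kappa_j}$-extension of the target is $\kappa_j\sigma_3$-twisted, so one spinor component always has the wrong parity. The paper's step \eqref{eq:halfline-restrict-merged} treats $X^{s,b_0}(\mathbb R_+)$ as a restriction space, whereas in \eqref{eq:X-decomp-merged} it is the $D_{\kappa_j}$-spectral space. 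Identifying the two is precisely your $\mathbf 1_{x>0}$ problem, so in either write-up that multiplier lemma, in the low-regularity range with $b_0$ chosen accordingly, is the missing piece.

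\textbf{Duhamel exponent.} For $F\in X^{s,b'-1}$ and $\frac12<b<b'$ the gain is $T^{b'-b}$, not $T^{1-b'+b}$. There is no gain at $b=b'$, and a forcing exponent below $-\frac12$ cannot be lifted to $b>\frac12$. With $b_0=b'-1$, the requirements $2b+4s>1$ and $b<b_0+1\le 1+2s$ are compatible exactly when $s>-\frac18$. This is where the threshold actually enters.
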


The condition $s>-\frac18$ is dictated by the sharp mixed-sign bilinear mechanism for the one-dimensional Dirac dispersion in \cite{Machihara2005}.
In the present setting, the bilinear estimate is transferred to $G$ through the spectral representation of $D$ and a finite-dimensional unitary reduction on the edge index.
This yields the same threshold $s>-\frac18$ within the Bourgain-space approach used here.

\begin{remark}\label{rem:limitations}
Theorem~\ref{thm:lwp} concerns only the mixed-sign quadratic structure $\mathcal B(\Pi_+\psi,\Pi_-\psi)$.
The proof does not cover generic polynomial nonlinearities, same-sign quadratic interactions, or gauge-invariant models such as Soler or Gross--Neveu on metric graphs.
Likewise, the threshold $s>-\frac18$ is sharp for the specific mixed-sign Bourgain-space mechanism used here, but we do not claim optimality for broader classes of graph nonlinear Dirac equations.
\end{remark}

The paper is organized as follows.
We recall the spectral setting of the Dirac--Kirchhoff operator and introduce the Bourgain spaces $X^{s,b}$ and their restriction versions $X_T^{s,b}$.
We establish the linear cutoff and Duhamel estimates, prove the mixed-sign bilinear estimate, and complete the contraction argument.
Finally, we prove Theorem~\ref{thm:lwp}, including the blow-up alternative.

\section{Preliminaries and Function Spaces}\label{sec:spaces}

Let $G$ be the noncompact $N$--star metric graph obtained by gluing $N$ copies of $\mathbb{R}_+$ at a common vertex $v$; see Figures~\ref{fig:N1star}--\ref{fig:N5star} for typical examples. We write
\[
G=\bigcup_{e=1}^N \mathbb{R}_+^{(e)},
\qquad
\text{where the endpoints }x=0\text{ on each }\mathbb{R}_+^{(e)}\text{ are identified with }v,
\qquad
x\in(0,\infty)\ \text{on each edge}.
\]
A spinor $\psi=(\varphi,\chi)^{\mathsf T}$ on $G$ is identified with the family
$\{\psi_e\}_{e=1}^N$, where $\psi_e=(\varphi_e,\chi_e)^{\mathsf T}:\mathbb{R}_+\to\mathbb{C}^2$
denotes the restriction to the edge $\mathbb{R}_+^{(e)}$.
Set
\[
L^2(G;\mathbb{C}^2)=\bigoplus_{e=1}^N L^2(\mathbb{R}_+^{(e)};\mathbb{C}^2),
\qquad
\|\psi\|_{L^2(G)}^2=\sum_{e=1}^N\int_0^\infty |\psi_e(x)|^2\,dx,
\]
where $|\cdot|$ is the Euclidean norm on $\mathbb{C}^2$.
Throughout the paper, the inner products on $L^2(G;\mathbb{C}^2)$ and $\mathbb{C}^2$ are taken to be linear in the second argument; in particular,
\[
\langle f,g\rangle_{L^2(G)}
=\sum_{e=1}^N\int_0^\infty \langle f_e(x),g_e(x)\rangle_{\mathbb{C}^2}\,dx,
\qquad
\langle a,b\rangle_{\mathbb{C}^2}=a^\ast b.
\]

We use the Pauli matrices
\[
\sigma_1=
\begin{pmatrix}
0 & 1\\
1 & 0
\end{pmatrix},
\qquad
\sigma_3=
\begin{pmatrix}
1 & 0\\
0 & -1
\end{pmatrix}.
\]
Fix $m\ge 0$. On each edge we consider the free Dirac expression
\[
D_e=-i\sigma_1\partial_x+m\sigma_3.
\]
At the vertex we impose the Dirac--Kirchhoff conditions
\begin{equation}\label{eq:DK-vertex}
\varphi_1(0)=\cdots=\varphi_N(0),
\qquad
\sum_{e=1}^N \chi_e(0)=0.
\end{equation}

We set
\[
H^1(G;\mathbb{C}^2)=\bigoplus_{e=1}^N H^1(\mathbb{R}_+^{(e)};\mathbb{C}^2),
\]
so that the traces $\psi_e(0)$ are well-defined for $\psi\in H^1(G;\mathbb{C}^2)$.
We define the maximal Dirac operator $D_{\max}$ on $L^2(G;\mathbb{C}^2)$ by
\[
(D_{\max}\psi)_e = D_e\psi_e,
\qquad
\mathrm{Dom}(D_{\max}) = H^1(G;\mathbb{C}^2),
\]
and the Dirac--Kirchhoff realization $D$ as the restriction of $D_{\max}$,
\begin{equation}\label{eq:D-domain}
(D\psi)_e = D_e\psi_e,
\qquad
\mathrm{Dom}(D)=\Bigl\{\psi\in H^1(G;\mathbb{C}^2)\,:\,\psi \text{ satisfies }\eqref{eq:DK-vertex}\Bigr\}.
\end{equation}
As shown in \cite{Borrelli2021}, the operator $D$ is self-adjoint on $L^2(G;\mathbb{C}^2)$.

\subsection{Operator Sobolev spaces and spectral projections}

For $s\in\mathbb{R}$ we define the operator Sobolev space $H_D^s(G)$ as follows.
If $s\ge 0$, set
\begin{equation}\label{eq:HDs}
H_D^s(G)=\mathrm{Dom}\bigl(\langle D\rangle^s\bigr),
\qquad
\|\psi\|_{H_D^s(G)}=\|\langle D\rangle^s\psi\|_{L^2(G)},
\qquad
\langle D\rangle=(I+D^2)^{1/2}.
\end{equation}
If $s<0$, note that the Borel function $\lambda\mapsto \langle \lambda\rangle^{s}$ is bounded on $\mathbb{R}$, hence
$\langle D\rangle^{s}$ is a bounded operator on $L^2(G;\mathbb{C}^2)$ by the functional calculus.
We define $H_D^s(G)$ as the completion of $L^2(G;\mathbb{C}^2)$ with respect to the norm
$\|\psi\|_{H_D^s(G)}=\|\langle D\rangle^s\psi\|_{L^2(G)}$.

We denote by $E_D(\cdot)$ the projection-valued spectral measure of the self-adjoint operator $D$.
We set
\[
\Pi_+ = E_D\bigl((0,\infty)\bigr)=\mathbf{1}_{(0,\infty)}(D),
\qquad
\Pi_- = E_D\bigl((-\infty,0)\bigr)=\mathbf{1}_{(-\infty,0)}(D),
\]
and
\[
\Pi_0 = E_D(\{0\})=\mathbf{1}_{\{0\}}(D),
\]
possibly $\Pi_0=0$.
Then
\[
I=\Pi_+ + \Pi_- + \Pi_0,
\qquad
\psi=\psi_+ + \psi_- + \psi_0,
\qquad
\psi_\bullet=\Pi_\bullet\psi \ \ (\bullet\in\{+,-,0\}).
\]
The operators $\Pi_\bullet$ commute with $D$ and with $\langle D\rangle^s$ for all $s\in\mathbb{R}$, hence they extend to bounded
projections on $H_D^s(G)$.

\begin{lemma}\label{lem:proj-bdd}
For every $s\in\mathbb{R}$ and every $\bullet\in\{+,-,0\}$, the spectral projections $\Pi_\bullet$ satisfy
\[
\|\Pi_\bullet \psi\|_{H_D^s(G)}\le \|\psi\|_{H_D^s(G)}
\qquad\text{for all }\psi\in H_D^s(G).
\]
Moreover,
\[
H_D^s(G)=\Pi_+H_D^s(G)\oplus \Pi_-H_D^s(G)\oplus \Pi_0H_D^s(G),
\]
and the ranges are pairwise orthogonal in $L^2(G;\mathbb{C}^2)$.
\end{lemma}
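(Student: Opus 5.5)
The plan is to reduce everything to the functional calculus for the self-adjoint operator $D$, working first on the dense subspace and then extending by density. The key observation is that $\Pi_\bullet=\mathbf 1_{A_\bullet}(D)$, with $A_+=(0,\infty)$, $A_-=(-\infty,0)$, $A_0=\{0\}$, and $\langle D\rangle^s=f_s(D)$ with $f_s(\lambda)=\langle\lambda\rangle^s$. Multiplicativity of the Borel functional calculus gives the commutation
\[
\langle D\rangle^s\Pi_\bullet=\Pi_\bullet\langle D\rangle^s
\]
on $\mathrm{Dom}(\langle D\rangle^s)$ when $s\ge0$, and on all of $L^2$ when $s<0$.

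\textbf{Case $s\ge0$.} Let $\psi\in\mathrm{Dom}(\langle D\rangle^s)$. Then $\Pi_\bullet\psi$ again lies in this domain, by the spectral characterization
\[
\int|f_s|^2\,d\mu_{\Pi_\bullet\psi}=\int_{A_\bullet}|f_s|^2\,d\mu_\psi<\infty .
\]
Consequently
\[
\|\Pi_\bullet\psi\|_{H^s_D}=\|\Pi_\bullet\langle D\rangle^s\psi\|_{L^2}\le\|\langle D\rangle^s\psi\|_{L^2},
\]
because $\Pi_\bullet$ is an orthogonal projection on $L^2$.

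\textbf{Case $s<0$.} For $\psi\in L^2$ the same identity $\langle D\rangle^s\Pi_\bullet\psi=\Pi_\bullet\langle D\rangle^s\psi$ gives the bound on the dense subspace $L^2\subset H^s_D$. Hence $\Pi_\bullet$ extends uniquely by continuity to a contraction on the completion $H^s_D(G)$, and this extension is what $\Pi_\bullet$ means there. The projection identities $\Pi_\bullet^2=\Pi_\bullet$, $\Pi_\bullet\Pi_\circ=0$ for $\bullet\ne\circ$, and $\Pi_++\Pi_-+\Pi_0=I$ hold on $L^2$ by functional calculus, since the indicators $\mathbf 1_{A_\bullet}$ satisfy the same relations pointwise. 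They therefore persist on $H^s_D$ by continuity.

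\textbf{Direct sum decomposition.} Writing $\psi=\Pi_+\psi+\Pi_-\psi+\Pi_0\psi$ gives existence of the decomposition. Uniqueness follows by applying $\Pi_\bullet$ to a putative decomposition and using the relations above. Each range $\Pi_\bullet H^s_D$ is closed, being the kernel of the bounded operator $I-\Pi_\bullet$, so the sum is a topological direct sum. In fact it is orthogonal in $H^s_D$ as well, since
\[
\langle\Pi_\bullet\psi,\Pi_\circ\phi\rangle_{H^s_D}=\langle\Pi_\bullet\langle D\rangle^s\psi,\Pi_\circ\langle D\rangle^s\phi\rangle_{L^2}=0 .
\]
Orthogonality of the ranges in $L^2$ is immediate from $\Pi_\bullet^*\Pi_\circ=\Pi_\bullet\Pi_\circ=0$ for $\bullet\ne\circ$.

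\textbf{Main obstacle.} The only delicate point is the case $s<0$. There "orthogonal in $L^2$" must be read for the $L^2$ parts, or equivalently as $\langle D\rangle^{s}$-orthogonality. One must also be careful that the extended $\Pi_\bullet$ are well defined on the completion. I would handle both through the unitary identification $\langle D\rangle^s:H^s_D\to L^2$: this is an isometry with dense range containing the range of $\langle D\rangle^s$ on $L^2$, hence it extends to a unitary map. Under this map $\Pi_\bullet$ is conjugated to the orthogonal projection $\Pi_\bullet$ on $L^2$, and all the claims then transfer directly from $L^2$.
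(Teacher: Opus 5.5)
Your proposal is correct and follows the same route as the paper. Like the paper, you commute $\Pi_\bullet$ with $\langle D\rangle^s$ via the functional calculus, use that $\Pi_\bullet$ is an $L^2$-orthogonal projection to get the contraction bound, and derive the decomposition from the relations satisfied by the indicators $\mathbf 1_{(0,\infty)}$, $\mathbf 1_{(-\infty,0)}$, $\mathbf 1_{\{0\}}$; your additional care (domain preservation for $s\ge0$, extension by density and the unitary $\langle D\rangle^s\colon H^s_D\to L^2$ for $s<0$, and reading the $L^2$-orthogonality for $s<0$ as $H^s_D$-orthogonality) fills in details that the paper passes over silently.
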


\begin{proof}
Since $D$ is self-adjoint, each $\Pi_\bullet=\mathbf{1}_{I_\bullet}(D)$ with
$I_+=(0,\infty)$, $I_- = (-\infty,0)$, $I_0=\{0\}$ is an orthogonal projection on $L^2(G;\mathbb{C}^2)$.
In particular, $\|\Pi_\bullet f\|_{L^2(G)}\le \|f\|_{L^2(G)}$ for all $f\in L^2(G;\mathbb{C}^2)$.

By the functional calculus, $\Pi_\bullet$ commutes with any bounded Borel function of $D$.
In particular, for every $s\in\mathbb{R}$ one has
\[
\Pi_\bullet \langle D\rangle^s=\langle D\rangle^s\Pi_\bullet.
\]
Therefore, for any $\psi\in H_D^s(G)$,
\[
\|\Pi_\bullet\psi\|_{H_D^s(G)}
=\|\langle D\rangle^s\Pi_\bullet\psi\|_{L^2(G)}
=\|\Pi_\bullet\langle D\rangle^s\psi\|_{L^2(G)}
\le \|\langle D\rangle^s\psi\|_{L^2(G)}
=\|\psi\|_{H_D^s(G)},
\]
which proves the norm inequality.

For the decomposition, note that the sets $(0,\infty)$, $(-\infty,0)$ and $\{0\}$ are pairwise disjoint and their union is $\mathbb{R}$.
Hence the spectral theorem yields
\[
\Pi_+\Pi_-=\Pi_+\Pi_0=\Pi_-\Pi_0=0,
\qquad
\Pi_+ + \Pi_- + \Pi_0 = I
\]
on $L^2(G;\mathbb{C}^2)$.
Thus
\[
L^2(G;\mathbb{C}^2)=\Pi_+L^2(G;\mathbb{C}^2)\oplus \Pi_-L^2(G;\mathbb{C}^2)\oplus \Pi_0L^2(G;\mathbb{C}^2),
\]
and the sum is orthogonal because the $\Pi_\bullet$ are orthogonal projections with pairwise orthogonal ranges.

Finally, since $\Pi_\bullet$ commutes with $\langle D\rangle^s$, the same direct sum decomposition holds in $H_D^s(G)$:
for any $\psi\in H_D^s(G)$,
\[
\psi = (\Pi_+\psi)+(\Pi_-\psi)+(\Pi_0\psi),
\qquad
\Pi_\bullet\psi\in H_D^s(G),
\]
and the ranges are pairwise $L^2$-orthogonal. This proves the stated decomposition in $H_D^s(G)$.
\end{proof}

\subsection{Spectral transform and Bourgain spaces}

We write
\[
\langle \xi\rangle=(1+|\xi|^{2})^{1/2}.
\]
By the spectral theorem for self-adjoint operators, there exist a Borel measure $\mu_D$ on $\mathbb{R}$,
a measurable family of Hilbert spaces $\{\mathcal{H}_\lambda\}_{\lambda\in\mathbb{R}}$,
and a unitary map
\[
\mathcal{F}_D:L^2(G;\mathbb{C}^2)\to L^2(\mathbb{R},d\mu_D;\mathcal{H}_\lambda)
\]
such that for every bounded Borel function $m$ and every $f\in L^2(G;\mathbb{C}^2)$ one has
\[
\bigl(\mathcal{F}_D(m(D)f)\bigr)(\lambda)=m(\lambda)\,(\mathcal{F}_D f)(\lambda)
\quad\text{for }\mu_D\text{-a.e. }\lambda\in\mathbb{R}.
\]
In particular, for every $f\in \mathrm{Dom}(D)$,
\[
(\mathcal{F}_D(D f))(\lambda)=\lambda(\mathcal{F}_D f)(\lambda)
\quad\text{for }\mu_D\text{-a.e. }\lambda\in\mathbb{R}.
\]
More generally, for every $s\in\mathbb{R}$ and every $f\in \mathrm{Dom}(\langle D\rangle^{s})$ one has
\[
\bigl(\mathcal{F}_D(\langle D\rangle^{s} f)\bigr)(\lambda)
=\langle\lambda\rangle^{s}(\mathcal{F}_D f)(\lambda)
\quad\text{for }\mu_D\text{-a.e. }\lambda\in\mathbb{R}.
\]
By the unitarity of $\mathcal{F}_D$, this yields
\begin{equation}\label{eq:plancherel-HD}
\|f\|_{H_D^s(G)}^2
=\|\langle D\rangle^{s} f\|_{L^2(G)}^{2}
=\int_{\mathbb{R}}\langle\lambda\rangle^{2s}\|(\mathcal{F}_D f)(\lambda)\|_{\mathcal{H}_\lambda}^2\,d\mu_D(\lambda),
\end{equation}
for all $f\in \mathrm{Dom}(\langle D\rangle^{s})$. By density, \eqref{eq:plancherel-HD} extends to all $f\in H_D^s(G)$.

For a spacetime function $u:\mathbb{R}\times G\to\mathbb{C}^2$ we define its joint time--spectral transform by
\[
\widetilde u(\tau,\lambda)=(\mathcal{F}_t\mathcal{F}_D u)(\tau,\lambda),
\qquad
\mathcal{F}_t u(\tau)=\frac{1}{\sqrt{2\pi}}\int_{\mathbb{R}}e^{-it\tau}u(t)\,dt,
\]
where $\mathcal{F}_D$ acts in the space variable for each fixed $t$.

We fix the dense core
\[
\mathcal{S}
=\bigcup_{M\in\mathbb{N}} C_c^\infty\bigl(\mathbb{R};\mathrm{Dom}(\langle D\rangle^M)\bigr),
\]
on which all the transforms and multipliers below are well-defined.

\begin{definition}\label{def:Xsb}
Let $s,b\in\mathbb{R}$.
We define $X^{s,b}(G)$ as the completion of $\mathcal{S}$ under the norm
\begin{equation}\label{eq:Xsb-norm}
\|u\|_{X^{s,b}}^2
=\int_{\mathbb{R}}\int_{\mathbb{R}}
\langle\lambda\rangle^{2s}\langle\tau+\lambda\rangle^{2b}
\|\widetilde u(\tau,\lambda)\|_{\mathcal{H}_\lambda}^2
\,d\tau\,d\mu_D(\lambda).
\end{equation}
For $\bullet\in\{+,-,0\}$ we set
\[
X_\bullet^{s,b}
=\{u\in X^{s,b}(G):\ \Pi_\bullet u=u\},
\qquad
\|u\|_{X_\bullet^{s,b}}=\|u\|_{X^{s,b}},
\]
where $\Pi_\bullet$ acts on $u(t,\cdot)$ for each fixed $t$.
\end{definition}

\begin{lemma}\label{lem:Xsb-shift}
Let $u\in\mathcal{S}$ and set $v(t)=e^{itD}u(t)$. Then
\[
\|u\|_{X^{s,b}}^{2}
=\int_{\mathbb{R}}\int_{\mathbb{R}}
\langle\lambda\rangle^{2s}\langle\tau\rangle^{2b}
\|(\mathcal{F}_t\mathcal{F}_D v)(\tau,\lambda)\|_{\mathcal{H}_\lambda}^{2}
\,d\tau\,d\mu_D(\lambda).
\]
\end{lemma}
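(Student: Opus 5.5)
The identity should follow by computing $\mathcal F_D v(t)$ pointwise in $t$, then $\mathcal F_t$, and finally changing variables in $\tau$.

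\emph{Step 1: spectral side at fixed time.} Fix $t$. Since $u\in\mathcal S$, $u(t)\in\mathrm{Dom}(\langle D\rangle^M)\subset L^2(G;\mathbb C^2)$. The function $\lambda\mapsto e^{it\lambda}$ is bounded and Borel, so the multiplier property of $\mathcal F_D$ gives
\[
(\mathcal F_D v(t))(\lambda)=e^{it\lambda}(\mathcal F_D u(t))(\lambda)\quad\text{for }\mu_D\text{-a.e. }\lambda .
\]
The exceptional null set may depend on $t$. To remove this dependence, I would note that $t\mapsto u(t)$ is smooth with values in $L^2$. Hence $t\mapsto \mathcal F_D u(t)$ and $t\mapsto\mathcal F_D v(t)$ are continuous into $L^2(\mathbb R,d\mu_D;\mathcal H_\lambda)$, and the latter is obtained from the former by multiplication by $e^{it\lambda}$. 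It therefore suffices to verify the identity as elements of the Bochner space $L^2(\mathbb R_t;L^2(d\mu_D;\mathcal H_\lambda))\cong L^2(dt\otimes d\mu_D;\mathcal H_\lambda)$. Working in this space, the identity holds for a.e. $(t,\lambda)$ by Fubini.

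\emph{Step 2: time Fourier transform.} Both functions are compactly supported in $t$ and belong to $L^1_t\cap L^2_t$ with values in $L^2(d\mu_D)$. For $\mu_D$-a.e. $\lambda$ we can therefore write
\[
(\mathcal F_t\mathcal F_D v)(\tau,\lambda)=\frac1{\sqrt{2\pi}}\int e^{-it\tau}e^{it\lambda}(\mathcal F_D u)(t,\lambda)\,dt=\widetilde u(\tau-\lambda,\lambda).
\]
Making this rigorous is the only delicate point. I would interpret $\mathcal F_t$ as the Bochner-valued Fourier transform on $L^1(\mathbb R;L^2(d\mu_D;\mathcal H_\lambda))$. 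Evaluating at $\lambda$ then commutes with the $t$-integral for a.e. $\lambda$, which I would justify by Fubini applied to $\int\!\!\int |\mathcal F_Du(t,\lambda)|\,dt\,d\mu_D$ on sets of finite $\mu_D$-measure, using that $\mu_D$ is $\sigma$-finite. Alternatively, one can pair with test functions $g\in L^2(d\mu_D)$ and use the identity in the weak sense.

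\emph{Step 3: change of variables.} Substitute the shifted transform into the right-hand side of the lemma. For each fixed $\lambda$, the translation $\tau\mapsto\tau+\lambda$ preserves Lebesgue measure, so
\[
\int\langle\tau\rangle^{2b}\|\widetilde u(\tau-\lambda,\lambda)\|^2_{\mathcal H_\lambda}\,d\tau=\int\langle\tau'+\lambda\rangle^{2b}\|\widetilde u(\tau',\lambda)\|^2_{\mathcal H_\lambda}\,d\tau',
\]
where $\tau'=\tau-\lambda$. Integrating against $\langle\lambda\rangle^{2s}d\mu_D(\lambda)$, and using Tonelli since the integrand is nonnegative, recovers exactly \eqref{eq:Xsb-norm}.

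The main obstacle is the measurability and a.e. bookkeeping in Steps 1–2, namely identifying the iterated transform with a jointly measurable function of $(\tau,\lambda)$. Once that identification is in place, the lemma is just the modulation–translation identity for the Fourier transform.
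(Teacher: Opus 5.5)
Your proposal is correct and follows the same route as the paper. Both arguments use the multiplier identity $(\mathcal F_D v)(t,\lambda)=e^{it\lambda}(\mathcal F_D u)(t,\lambda)$, then the modulation--translation rule $(\mathcal F_t\mathcal F_D v)(\tau,\lambda)=\widetilde u(\tau-\lambda,\lambda)$, then the shift $\sigma=\tau-\lambda$ in the $\tau$-integral before integrating against $\langle\lambda\rangle^{2s}\,d\mu_D$; you additionally handle the $t$-dependent null sets and joint measurability (Bochner space, Fubini/Tonelli, $\sigma$-finiteness of $\mu_D$), which the paper passes over without comment.
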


\begin{proof}
Let $u\in\mathcal{S}$ and define $v(t)=e^{itD}u(t)$.
Using the spectral representation of $D$, for $\mu_D$-a.e. $\lambda$ we have
\[
(\mathcal{F}_D v)(t,\lambda)=e^{it\lambda}(\mathcal{F}_D u)(t,\lambda).
\]
Taking $\mathcal{F}_t$ yields
\[
(\mathcal{F}_t\mathcal{F}_D v)(\tau,\lambda)
=(\mathcal{F}_t\mathcal{F}_D u)(\tau-\lambda,\lambda)
=\widetilde u(\tau-\lambda,\lambda).
\]
Therefore
\[
\int_{\mathbb{R}}\langle\tau\rangle^{2b}
\|(\mathcal{F}_t\mathcal{F}_D v)(\tau,\lambda)\|_{\mathcal{H}_\lambda}^{2}\,d\tau
=
\int_{\mathbb{R}}\langle\tau\rangle^{2b}
\|\widetilde u(\tau-\lambda,\lambda)\|_{\mathcal{H}_\lambda}^{2}\,d\tau.
\]
Substituting $\sigma=\tau-\lambda$ (so that $\tau=\sigma+\lambda$) gives
\[
\int_{\mathbb{R}}\langle\tau\rangle^{2b}
\|\widetilde u(\tau-\lambda,\lambda)\|_{\mathcal{H}_\lambda}^{2}\,d\tau
=
\int_{\mathbb{R}}\langle\sigma+\lambda\rangle^{2b}
\|\widetilde u(\sigma,\lambda)\|_{\mathcal{H}_\lambda}^{2}\,d\sigma.
\]
Multiplying by $\langle\lambda\rangle^{2s}$, integrating in $\lambda$ against $d\mu_D(\lambda)$,
and using \eqref{eq:Xsb-norm} yields the claimed identity.
\end{proof}

\begin{definition}\label{def:restriction}
Let $T>0$.
The restriction space $X_T^{s,b}$ consists of all functions $u$ defined on $[0,T]$ such that
\[
\|u\|_{X_T^{s,b}}
=\inf\{\|U\|_{X^{s,b}}:\ U\in X^{s,b},\ U(t)=u(t)\ \text{for }t\in[0,T]\}
<\infty.
\]
Similarly, $X_{\bullet,T}^{s,b}$ is defined using $X_\bullet^{s,b}$ for $\bullet\in\{+,-,0\}$.
\end{definition}

\begin{lemma}\label{lem:restriction-banach}
For every $s,b\in\mathbb{R}$ and $T>0$, the space $X_T^{s,b}$ is a Banach space.
Moreover, if $0<T_1\le T_2$ and $u$ is defined on $[0,T_2]$, then
\[
\|u|_{[0,T_1]}\|_{X_{T_1}^{s,b}}\le \|u\|_{X_{T_2}^{s,b}}.
\]
\end{lemma}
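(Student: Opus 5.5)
The plan is to identify $X_T^{s,b}$ with a quotient of the Banach space $X^{s,b}$ by a closed subspace, and then read off completeness from the general fact that such quotients are complete. First, $X^{s,b}$ is a Hilbert space. By Lemma~\ref{lem:Xsb-shift} and the unitarity of $\mathcal F_D$ and $\mathcal F_t$, the norm \eqref{eq:Xsb-norm} is the norm of a weighted $L^2$ space
\[
L^2\bigl(\mathbb R\times\mathbb R,\ \langle\lambda\rangle^{2s}\langle\tau+\lambda\rangle^{2b}\,d\tau\,d\mu_D;\mathcal H_\lambda\bigr),
\]
restricted to the image of $\mathcal S$. The completion of $\mathcal S$ under this norm is therefore a Hilbert space, and each of its elements is realized as a distribution in time with values in $\bigcup_M H_D^{-M}(G)$.

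Next, set
\[
\mathcal N_T=\{U\in X^{s,b}:\ U(t)=0 \text{ for } t\in[0,T]\}.
\]
Here ``$U(t)=0$ on $[0,T]$'' is understood as vanishing of $U$ as a distribution on $(0,T)\times G$, meaning $\langle U,\phi\rangle=0$ for all test functions $\phi\in C_c^\infty((0,T);\mathrm{Dom}(\langle D\rangle^M))$. (When $b>1/2$, $U$ is continuous in $t$ with values in $H_D^s$, so this is pointwise vanishing, including at the endpoints.) The key step is that $\mathcal N_T$ is a closed linear subspace. Each pairing $U\mapsto\langle U,\phi\rangle$ is a continuous linear functional on $X^{s,b}$. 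This holds because, on the transform side, it is the pairing of $\widetilde U$ with $\widetilde\phi$, and
\[
\langle\lambda\rangle^{-s}\langle\tau+\lambda\rangle^{-b}\widetilde\phi\in L^2,
\]
since $\phi$ is Schwartz in $t$ and lies in $\mathrm{Dom}(\langle D\rangle^M)$ for every $M$. Thus $\mathcal N_T$ is an intersection of kernels of continuous functionals, hence closed.

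We then identify $X_T^{s,b}$ with $X^{s,b}/\mathcal N_T$. Two extensions $U_1,U_2$ of the same $u$ differ by an element of $\mathcal N_T$, so $\|u\|_{X_T^{s,b}}$ is exactly the quotient norm of the class of any extension. Conversely, every class determines a unique $u=U|_{[0,T]}$. The map $u\mapsto [U]$ is therefore an isometric linear bijection, and the quotient of a Banach space by a closed subspace is Banach. For a self-contained argument, take a Cauchy sequence $u_n$, pass to a subsequence with $\|u_{n+1}-u_n\|_{X_T}<2^{-n}$, choose extensions $W_n$ of the differences with $\|W_n\|<2^{-n}$, and sum $U=U_1+\sum_n W_n$ in $X^{s,b}$. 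Then $U$ extends the limit $u$, and $u_n\to u$ in $X_T^{s,b}$.

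Monotonicity follows by inclusion of admissible sets. If $U\in X^{s,b}$ agrees with $u$ on $[0,T_2]$, then it agrees with $u|_{[0,T_1]}$ on $[0,T_1]$, so the infimum defining $\|u|_{[0,T_1]}\|_{X_{T_1}^{s,b}}$ is taken over a larger set and is no larger. The only delicate point is the closedness of $\mathcal N_T$, which needs a precise meaning of ``$U(t)$'' for abstract completion elements. I would handle it with the distributional pairing above, so that no restriction on $b$ is required.
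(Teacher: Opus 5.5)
Your proof is correct and follows the paper's route: identify $X_T^{s,b}$ with $X^{s,b}/\mathcal N_T$, get completeness from the closedness of $\mathcal N_T$, and get monotonicity in $T$ from the inclusion of admissible extension sets. The monotonicity part is literally the paper's argument.

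The genuine difference is in the key step, the closedness of $\mathcal N_T$. The paper takes $W_n\in\mathcal N_T$ with $W_n\to W$, observes that $\|W|_{[0,T]}\|_{X_T^{s,b}}\le\|W-W_n\|_{X^{s,b}}\to 0$, and concludes $W\in\mathcal N_T$. But a vanishing quotient seminorm only gives $W\in\overline{\mathcal N_T}$. So that step presupposes that $\|\cdot\|_{X_T^{s,b}}$ is definite, which is equivalent to what it is trying to prove.

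Your argument supplies the missing ingredient. You write $\mathcal N_T$ as an intersection of kernels of the functionals $U\mapsto\langle U,\phi\rangle$, with $\phi$ a test function supported in $(0,T)$. Each functional is bounded by $\|U\|_{X^{s,b}}\,\|\langle\lambda\rangle^{-s}\langle\tau+\lambda\rangle^{-b}\widetilde\phi\|_{L^2}$ via Cauchy--Schwarz on the transform side. This also gives a precise meaning to ``$U(t)=0$ on $[0,T]$'' when $b\le\frac12$. There pointwise values are unavailable, and the paper's claim for every $b\in\mathbb R$ is otherwise ambiguous. For $b>\frac12$ you could instead use the first half of Lemma~\ref{lem:embed}, which bounds $\|U(t)\|_{H_D^s(G)}$ by $C_b\|U\|_{X^{s,b}}$ for every $t\in\mathbb R$, so point evaluations are continuous.

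Two small tidy-ups:
\begin{enumerate}
\item Finiteness of the dual weight uses Peetre's inequality $\langle\tau+\lambda\rangle^{-2b}\le C\langle\tau\rangle^{2|b|}\langle\lambda\rangle^{2|b|}$. So the spatial index of your test class should be fixed with $M\ge|b|-s$, rather than ``every $M$''.
\item Your self-contained Cauchy-sequence argument only proves completeness of the seminormed quotient. Definiteness comes from the closedness you proved separately, and as you have organized it that is fine.
\end{enumerate}
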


\begin{proof}
Define an equivalence relation on $X^{s,b}$ by declaring $U\sim V$ if $U(t)=V(t)$ for $t\in[0,T]$.
Let $\mathcal{N}_T$ be the corresponding subspace
\[
\mathcal{N}_T=\{W\in X^{s,b}:\ W(t)=0 \text{ for } t\in[0,T]\},
\]
where the equality on $[0,T]$ is understood in the sense of restrictions, i.e. $W$ represents the zero element in $X_T^{s,b}$.
Then $X_T^{s,b}$ is naturally identified with the quotient space $X^{s,b}/\mathcal{N}_T$ equipped with the quotient norm
\[
\|u\|_{X_T^{s,b}}=\inf\{\|U\|_{X^{s,b}}:\ U\in X^{s,b},\ U(t)=u(t)\ \text{for }t\in[0,T]\}.
\]

We claim that $\mathcal{N}_T$ is closed in $X^{s,b}$.
Indeed, let $W_n\in\mathcal{N}_T$ and $W_n\to W$ in $X^{s,b}$.
Since $W-W_n$ is an extension of $W|_{[0,T]}$ for every $n$, by the definition of the quotient norm we have
\[
\|W|_{[0,T]}\|_{X_T^{s,b}}\le \|W-W_n\|_{X^{s,b}}.
\]
Letting $n\to\infty$ gives $\|W|_{[0,T]}\|_{X_T^{s,b}}=0$, hence $W\in\mathcal{N}_T$.
Therefore $\mathcal{N}_T$ is closed.

Since $X^{s,b}$ is Banach and $\mathcal{N}_T$ is a closed subspace, the quotient $X^{s,b}/\mathcal{N}_T$
is Banach. Hence $X_T^{s,b}$ is a Banach space.

Finally, let $0<T_1\le T_2$ and let $u$ be defined on $[0,T_2]$.
For every extension $U\in X^{s,b}$ with $U=u$ on $[0,T_2]$, the same $U$ satisfies $U=u|_{[0,T_1]}$ on $[0,T_1]$.
Taking the infimum over all such $U$ yields
\[
\|u|_{[0,T_1]}\|_{X_{T_1}^{s,b}}\le \|u\|_{X_{T_2}^{s,b}}.
\]
\end{proof}

\begin{lemma}\label{lem:embed}
If $b>\frac12$, then
\[
X_T^{s,b}\hookrightarrow C([0,T];H_D^s(G)).
\]
More precisely, there exists a constant $C_b$ such that for every $u\in X_T^{s,b}$,
\begin{equation}\label{eq:Xsb-embed}
\sup_{t\in[0,T]}\|u(t)\|_{H_D^s(G)}\le C_b\|u\|_{X_T^{s,b}}.
\end{equation}
\end{lemma}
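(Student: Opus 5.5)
We first prove the global version of the embedding: for $U\in\mathcal S$,
\[
\sup_{t\in\mathbb R}\|U(t)\|_{H_D^s(G)}\le C_b\|U\|_{X^{s,b}}.
\]
Set $V(t)=e^{itD}U(t)$. Since $e^{itD}$ is unitary and commutes with $\langle D\rangle^s$, we have $\|U(t)\|_{H_D^s}=\|V(t)\|_{H_D^s}$. Lemma~\ref{lem:Xsb-shift} rewrites $\|U\|_{X^{s,b}}^2$ as $\int\!\!\int\langle\lambda\rangle^{2s}\langle\tau\rangle^{2b}\|\widehat V(\tau,\lambda)\|^2\,d\tau\,d\mu_D$, where $\widehat V=\mathcal F_t\mathcal F_D V$.

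Fourier inversion in $t$ gives, for $\mu_D$-a.e.\ $\lambda$,
\[
(\mathcal F_D V)(t,\lambda)=(2\pi)^{-1/2}\int e^{it\tau}\widehat V(\tau,\lambda)\,d\tau .
\]
By Cauchy--Schwarz in $\tau$,
\[
\|(\mathcal F_DV)(t,\lambda)\|_{\mathcal H_\lambda}^2\le (2\pi)^{-1}\Bigl(\int\langle\tau\rangle^{-2b}d\tau\Bigr)\int\langle\tau\rangle^{2b}\|\widehat V(\tau,\lambda)\|^2d\tau.
\]
The first factor is finite exactly because $b>\frac12$. Multiplying by $\langle\lambda\rangle^{2s}$, integrating in $d\mu_D$, and using \eqref{eq:plancherel-HD} gives the pointwise-in-$t$ bound with $C_b^2=(2\pi)^{-1}\int\langle\tau\rangle^{-2b}d\tau$. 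The interchange of integrals needed here is harmless on the core $\mathcal S$.

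Next we get continuity. For $U\in\mathcal S$, $t\mapsto U(t)\in H_D^s$ is continuous, since $U\in C_c^\infty(\mathbb R;\mathrm{Dom}\langle D\rangle^M)$. For general $U\in X^{s,b}$, take $U_n\in\mathcal S$ with $U_n\to U$ in $X^{s,b}$. The global bound shows that $(U_n)$ is Cauchy in $C_b(\mathbb R;H_D^s(G))$ with the sup norm, so it converges uniformly to a continuous limit. This limit is identified with $U$, which defines the trace map $X^{s,b}\to C(\mathbb R;H_D^s)$ with the same constant. The only delicate point is making sense of the pointwise values $U(t)$ for elements of the completion, and we take the continuous representative as the definition. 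We expect this identification to be the main (mild) obstacle, since the paper defines $X^{s,b}$ abstractly as a completion.

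Finally, let $u\in X_T^{s,b}$ and let $U\in X^{s,b}$ be any extension with $U=u$ on $[0,T]$. Then $u\in C([0,T];H_D^s)$ and
\[
\sup_{[0,T]}\|u(t)\|_{H_D^s}\le\sup_{\mathbb R}\|U(t)\|_{H_D^s}\le C_b\|U\|_{X^{s,b}}.
\]
Taking the infimum over extensions, as in Definition~\ref{def:restriction}, yields \eqref{eq:Xsb-embed}. Linearity of the restriction map together with this bound gives the continuous embedding.
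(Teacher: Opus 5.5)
Your argument is correct. The core estimate is the paper's: Fourier inversion in $t$ plus Cauchy--Schwarz against $\langle\cdot\rangle^{-2b}$, with the same constant $C_b^2=(2\pi)^{-1}\int\langle\tau\rangle^{-2b}\,d\tau$. Conjugating by $e^{itD}$ through Lemma~\ref{lem:Xsb-shift} is only a cosmetic alternative to working with the shifted weight $\langle\tau+\lambda\rangle^{b}$.

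The genuine difference is how you reach general elements and continuity. The paper takes an arbitrary extension $U\in X^{s,b}$ and writes the inversion formula for $\widetilde U$ directly. It then proves continuity fiberwise: $g_\lambda(t)=e^{it\lambda}(\mathcal F_DU)(t,\lambda)$ lies in $H^b(\mathbb R;\mathcal H_\lambda)\hookrightarrow C(\mathbb R;\mathcal H_\lambda)$ for $\mu_D$-a.e.\ $\lambda$. Dominated convergence, with the fiberwise sup bound as majorant, then gives continuity in $H_D^s(G)$. You instead prove the bound only on $\mathcal S$ and extend by density, so continuity comes free from uniform convergence. You also make explicit that $U(t)$ for an element of the abstract completion is \emph{defined} as this continuous representative, which the paper leaves implicit.

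One detail to tighten: continuity of $t\mapsto U(t)\in H_D^s(G)$ for $U\in\mathcal S$ needs $M\ge s$. Since $\mathcal S$ as written is a union of a decreasing family, it equals its largest member, so this is not automatic for large $s$. Either read $\mathcal S$ with $M$ arbitrarily large (an intersection, as evidently intended), or argue as follows. For a core element of finite $X^{s,b}$ norm, $\langle D\rangle^s e^{itD}U(t)$ is the inverse time-Fourier transform of an $L^1(\mathbb R_\tau;L^2(G))$ function and is therefore continuous. That is the paper's continuity argument applied on the core.
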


\begin{proof}
Let $u\in X_T^{s,b}$. By definition of the restriction norm, for every $\varepsilon>0$ there exists an extension
$U\in X^{s,b}$ such that $U(t)=u(t)$ on $[0,T]$ and
\[
\|U\|_{X^{s,b}}\le \|u\|_{X_T^{s,b}}+\varepsilon.
\]
Fix $t\in\mathbb{R}$. By Fourier inversion in time applied to $(\mathcal{F}_D U)(t,\lambda)$,
\[
(\mathcal{F}_D U)(t,\lambda)
=\frac1{\sqrt{2\pi}}\int_{\mathbb{R}}e^{it\tau}\widetilde U(\tau,\lambda)\,d\tau
\quad\text{in }\mathcal{H}_\lambda.
\]
Using \eqref{eq:plancherel-HD} and Cauchy--Schwarz in $\tau$ with weight $\langle\tau+\lambda\rangle^{b}$, we obtain
\begin{align*}
\|U(t)\|_{H_D^s(G)}^2
&=\int_{\mathbb{R}}\langle\lambda\rangle^{2s}
\|(\mathcal{F}_D U)(t,\lambda)\|_{\mathcal{H}_\lambda}^2\,d\mu_D(\lambda)\\
&\le \frac1{2\pi}\int_{\mathbb{R}}\langle\lambda\rangle^{2s}
\left(\int_{\mathbb{R}}\langle\tau+\lambda\rangle^{-2b}\,d\tau\right)
\left(\int_{\mathbb{R}}\langle\tau+\lambda\rangle^{2b}\|\widetilde U(\tau,\lambda)\|_{\mathcal{H}_\lambda}^2\,d\tau\right)
d\mu_D(\lambda).
\end{align*}
Since $b>\frac12$,
\[
K_b=\int_{\mathbb{R}}\langle\sigma\rangle^{-2b}\,d\sigma<\infty,
\qquad
\int_{\mathbb{R}}\langle\tau+\lambda\rangle^{-2b}\,d\tau=K_b,
\]
and therefore
\[
\|U(t)\|_{H_D^s(G)}^2
\le \frac{K_b}{2\pi}\|U\|_{X^{s,b}}^2.
\]
Taking the supremum over $t\in[0,T]$ yields
\[
\sup_{t\in[0,T]}\|u(t)\|_{H_D^s(G)}
\le \sqrt{\frac{K_b}{2\pi}}\bigl(\|u\|_{X_T^{s,b}}+\varepsilon\bigr).
\]
Letting $\varepsilon\to0$ gives \eqref{eq:Xsb-embed} with $C_b=\sqrt{\frac{K_b}{2\pi}}$.

To prove continuity, fix an extension $U\in X^{s,b}$ with $U=u$ on $[0,T]$.
For $\mu_D$-a.e.\ $\lambda$, define the $\mathcal{H}_\lambda$-valued function
\[
g_\lambda(t)=e^{it\lambda}(\mathcal{F}_D U)(t,\lambda).
\]
Then
\[
(\mathcal{F}_t g_\lambda)(\tau)=\widetilde U(\tau-\lambda,\lambda),
\qquad
\int_{\mathbb{R}}\langle\tau\rangle^{2b}\|(\mathcal{F}_t g_\lambda)(\tau)\|_{\mathcal{H}_\lambda}^2\,d\tau
=\int_{\mathbb{R}}\langle\tau+\lambda\rangle^{2b}\|\widetilde U(\tau,\lambda)\|_{\mathcal{H}_\lambda}^2\,d\tau<\infty.
\]
Hence $g_\lambda\in H^b(\mathbb{R};\mathcal{H}_\lambda)$, and since $b>\frac12$ we have
$H^b(\mathbb{R};\mathcal{H}_\lambda)\hookrightarrow C(\mathbb{R};\mathcal{H}_\lambda)$.
Therefore $t\mapsto (\mathcal{F}_D U)(t,\lambda)=e^{-it\lambda}g_\lambda(t)$ is continuous for $\mu_D$-a.e.\ $\lambda$.

Let $t_n\to t$ with $t_n,t\in[0,T]$. By \eqref{eq:plancherel-HD},
\[
\|U(t_n)-U(t)\|_{H_D^s(G)}^2
=\int_{\mathbb{R}}\langle\lambda\rangle^{2s}
\|(\mathcal{F}_D U)(t_n,\lambda)-(\mathcal{F}_D U)(t,\lambda)\|_{\mathcal{H}_\lambda}^2\,d\mu_D(\lambda).
\]
The integrand converges to $0$ for $\mu_D$-a.e.\ $\lambda$, and by the first part of the proof,
\[
\|(\mathcal{F}_D U)(t_n,\lambda)-(\mathcal{F}_D U)(t,\lambda)\|_{\mathcal{H}_\lambda}
\le 2\sup_{r\in[0,T]}\|(\mathcal{F}_D U)(r,\lambda)\|_{\mathcal{H}_\lambda}
\le \frac{2\sqrt{K_b}}{\sqrt{2\pi}}
\left(\int_{\mathbb{R}}\langle\tau+\lambda\rangle^{2b}\|\widetilde U(\tau,\lambda)\|_{\mathcal{H}_\lambda}^2\,d\tau\right)^{1/2}.
\]
The square of the right-hand side is integrable in $\lambda$ with weight $\langle\lambda\rangle^{2s}$ because $U\in X^{s,b}$.
Thus dominated convergence yields $\|U(t_n)-U(t)\|_{H_D^s(G)}\to0$.
Since $u=U$ on $[0,T]$, we conclude $u\in C([0,T];H_D^s(G))$.
\end{proof}

\section{Linear and Bilinear Estimates}\label{sec:estimates}

Throughout this section, $X^{s,b}$ and $X_T^{s,b}$ are the Bourgain spaces associated with $D$
as in Definitions~\ref{def:Xsb}--\ref{def:restriction}.
We fix a nonnegative cutoff $\eta\in C_c^\infty(\mathbb{R})$ such that
\begin{equation}\label{eq:cutoff-basic}
\eta(t)=1\ \text{for }|t|\le 1,
\qquad
\mathrm{supp}\,\eta\subset\{t:|t|\le 2\}.
\end{equation}
For $T\in(0,1]$ we set $\eta_T(t)=\eta(t/T)$.

\begin{lemma}\label{lem:free}
Let $\eta\in C_c^\infty(\mathbb{R})$.
For any $s\in\mathbb{R}$ and any $b\in\mathbb{R}$,
\begin{equation}\label{eq:free-cutoff}
\|\eta(t)e^{-itD}\psi_0\|_{X^{s,b}}\le C_{\eta,b}\|\psi_0\|_{H_D^s(G)}.
\end{equation}
Consequently, if $\eta$ satisfies \eqref{eq:cutoff-basic}, then for any $T\in(0,1]$,
\begin{equation}\label{eq:free-restriction}
\|e^{-itD}\psi_0\|_{X_T^{s,b}}\le C_{\eta,b}\|\psi_0\|_{H_D^s(G)}.
\end{equation}
\end{lemma}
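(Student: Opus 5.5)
The plan is to compute the joint time--spectral transform of $u(t)=\eta(t)e^{-itD}\psi_0$ explicitly and then apply Lemma~\ref{lem:Xsb-shift}. For $\mu_D$-a.e.\ $\lambda$ the spectral representation gives $(\mathcal{F}_D u)(t,\lambda)=\eta(t)e^{-it\lambda}(\mathcal{F}_D\psi_0)(\lambda)$. Hence $v(t)=e^{itD}u(t)=\eta(t)\psi_0$ and
\[
(\mathcal{F}_t\mathcal{F}_D v)(\tau,\lambda)=\widehat\eta(\tau)\,(\mathcal{F}_D\psi_0)(\lambda).
\]
Equivalently, $\widetilde u(\tau,\lambda)=\widehat\eta(\tau+\lambda)(\mathcal{F}_D\psi_0)(\lambda)$, so the time and spectral variables factorize.

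First suppose $\psi_0\in\mathrm{Dom}(\langle D\rangle^M)$ for some $M$, so that $u\in\mathcal{S}$ and Lemma~\ref{lem:Xsb-shift} applies directly. It gives
\[
\|u\|_{X^{s,b}}^2=\int_{\mathbb{R}}\langle\tau\rangle^{2b}|\widehat\eta(\tau)|^2\,d\tau\int_{\mathbb{R}}\langle\lambda\rangle^{2s}\|(\mathcal{F}_D\psi_0)(\lambda)\|_{\mathcal{H}_\lambda}^2\,d\mu_D(\lambda)=\|\eta\|_{H^b(\mathbb{R})}^2\|\psi_0\|_{H_D^s(G)}^2,
\]
where the last step uses \eqref{eq:plancherel-HD}. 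Since $\eta\in C_c^\infty$, $\widehat\eta$ is a Schwartz function, so $C_{\eta,b}=\|\eta\|_{H^b}<\infty$ for every $b\in\mathbb{R}$. This proves \eqref{eq:free-cutoff}, in fact with equality, on this dense class.

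For general $\psi_0\in H_D^s(G)$, I would approximate by $\psi_0^{(n)}=\mathbf 1_{[-n,n]}(D)\psi_0$, which lies in every $\mathrm{Dom}(\langle D\rangle^M)$ and converges to $\psi_0$ in $H_D^s$ by \eqref{eq:plancherel-HD} and dominated convergence (for $s<0$, first take $\psi_0\in L^2$ and then use density). The isometric identity shows that $\eta e^{-itD}\psi_0^{(n)}$ is Cauchy in $X^{s,b}$, and its limit is identified with $\eta e^{-itD}\psi_0$. This density and identification step is the only delicate point, since $X^{s,b}$ is defined as a completion. It is routine, because the map $\psi_0\mapsto\eta e^{-itD}\psi_0$ extends by continuity and agrees pointwise in $t$ in $H_D^s$.

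For \eqref{eq:free-restriction}, with $\eta$ as in \eqref{eq:cutoff-basic} and $T\in(0,1]$, we have $\eta(t)=1$ on $[0,T]\subset[-1,1]$. Thus $U=\eta e^{-itD}\psi_0$ is an admissible extension of $e^{-itD}\psi_0|_{[0,T]}$ in Definition~\ref{def:restriction}, and the infimum is bounded by $\|U\|_{X^{s,b}}\le C_{\eta,b}\|\psi_0\|_{H_D^s}$, uniformly in $T$.
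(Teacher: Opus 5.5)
Your proposal is correct and follows the paper's proof. Both compute the factorization $\widetilde u(\tau,\lambda)=\widehat\eta(\tau+\lambda)(\mathcal F_D\psi_0)(\lambda)$, shift $\sigma=\tau+\lambda$ (which you package through Lemma~\ref{lem:Xsb-shift}) to get the constant $\int\langle\sigma\rangle^{2b}|\widehat\eta(\sigma)|^2\,d\sigma$, and then use $\eta e^{-itD}\psi_0$ as an admissible extension on $[0,T]$. Your density step via $\mathbf 1_{[-n,n]}(D)\psi_0$ is extra care that the paper omits. One small correction: membership in $\mathcal S$ requires $\psi_0$ to lie in every $\mathrm{Dom}(\langle D\rangle^M)$, not just one, so that $t\mapsto e^{-itD}\psi_0$ is smooth. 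Your spectrally truncated approximants do satisfy this, so the argument is unaffected.
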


\begin{proof}
Let $\psi_0\in H_D^s(G)$ and set $U(t)=\eta(t)e^{-itD}\psi_0$.
Applying the spectral transform $\mathcal{F}_D$ in the space variable gives
\[
(\mathcal{F}_D U)(t,\lambda)=\eta(t)e^{-it\lambda}(\mathcal{F}_D\psi_0)(\lambda)
\quad\text{in }\mathcal{H}_\lambda.
\]
Taking the Fourier transform in time yields
\[
\widetilde U(\tau,\lambda)
=(\mathcal{F}_t\mathcal{F}_D U)(\tau,\lambda)
=\widehat{\eta}(\tau+\lambda)\,(\mathcal{F}_D\psi_0)(\lambda),
\]
where $\widehat{\eta}=\mathcal{F}_t\eta$.
Hence, by the definition of the $X^{s,b}$ norm,
\begin{align*}
\|U\|_{X^{s,b}}^2
&=\int_{\mathbb{R}}\int_{\mathbb{R}}
\langle\lambda\rangle^{2s}\langle\tau+\lambda\rangle^{2b}
|\widehat{\eta}(\tau+\lambda)|^2
\|(\mathcal{F}_D\psi_0)(\lambda)\|_{\mathcal{H}_\lambda}^2
\,d\tau\,d\mu_D(\lambda)\\
&=\left(\int_{\mathbb{R}}\langle\sigma\rangle^{2b}|\widehat{\eta}(\sigma)|^2\,d\sigma\right)
\int_{\mathbb{R}}\langle\lambda\rangle^{2s}\|(\mathcal{F}_D\psi_0)(\lambda)\|_{\mathcal{H}_\lambda}^2\,d\mu_D(\lambda),
\end{align*}
where we used the change of variables $\sigma=\tau+\lambda$.
By \eqref{eq:plancherel-HD}, the second integral equals $\|\psi_0\|_{H_D^s(G)}^2$.
This proves \eqref{eq:free-cutoff} with
\[
C_{\eta,b}^2=\int_{\mathbb{R}}\langle\sigma\rangle^{2b}|\widehat{\eta}(\sigma)|^2\,d\sigma,
\]
which is finite since $\widehat{\eta}$ is a Schwartz function.

To prove \eqref{eq:free-restriction}, assume that $\eta$ satisfies \eqref{eq:cutoff-basic}.
For $T\in(0,1]$, the function $U(t)=\eta(t)e^{-itD}\psi_0$ coincides with $e^{-itD}\psi_0$ on $[0,T]$.
Hence Definition~\ref{def:restriction} gives
\[
\|e^{-itD}\psi_0\|_{X_T^{s,b}}
\le \|U\|_{X^{s,b}}
\le C_{\eta,b}\|\psi_0\|_{H_D^s(G)}.
\]
\end{proof}

Let $\mathcal H$ be a Hilbert space. For $\lambda\in\mathbb{R}$ and $\theta\in\mathbb{R}$, we define the shifted Sobolev space
$H_\lambda^\theta(\mathbb{R};\mathcal H)$ by
\[
\|g\|_{H_\lambda^\theta(\mathbb{R};\mathcal H)}^2
=\int_{\mathbb{R}}\langle\tau+\lambda\rangle^{2\theta}\|\widehat g(\tau)\|_{\mathcal H}^2\,d\tau,
\qquad
\widehat g=\mathcal{F}_t g.
\]

\begin{lemma}\label{lem:scalar-duhamel}
Let $b\in\left(\frac12,1\right)$ and $b'\in\left(0,\frac12\right)$.
Let $\eta$ satisfy \eqref{eq:cutoff-basic} and let $T\in(0,1]$.
For any $\lambda\in\mathbb{R}$ and any Hilbert space $\mathcal H$, define
\[
\mathcal T_\lambda f(t)
=\eta_T(t)\int_0^t e^{-i(t-\tau)\lambda}f(\tau)\,d\tau,
\qquad
\eta_T(t)=\eta(t/T),
\]
for $f\in H_\lambda^{b'-1}(\mathbb{R};\mathcal H)$.
Then there exists $C$ depending only on $b,b'$ and $\eta$ such that
\begin{equation}\label{eq:scalar-duhamel}
\|\mathcal T_\lambda f\|_{H_\lambda^{b}(\mathbb{R};\mathcal H)}
\le C\,T^{\,1-b+b'}\|f\|_{H_\lambda^{b'-1}(\mathbb{R};\mathcal H)}.
\end{equation}
The constant $C$ is independent of $\lambda$ and $T$.
\end{lemma}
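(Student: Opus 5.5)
We reduce to the standard one-dimensional scalar Duhamel estimate by conjugating away the phase $e^{-it\lambda}$. Set $F(t)=e^{it\lambda}f(t)$; then $\widehat F(\tau)=\widehat f(\tau-\lambda)$. Hence
\[
\|F\|_{H^{\theta}(\mathbb{R};\mathcal H)}=\|f\|_{H^\theta_\lambda(\mathbb{R};\mathcal H)}
\]
for every $\theta$, by the substitution $\tau\mapsto\tau+\lambda$, exactly as in Lemma~\ref{lem:Xsb-shift}. Moreover,
\[
\mathcal T_\lambda f(t)=e^{-it\lambda}\,\eta_T(t)\int_0^tF(\tau)\,d\tau ,
\]
so $\|\mathcal T_\lambda f\|_{H^b_\lambda}=\|\eta_T\int_0^tF\|_{H^b}$. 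Thus \eqref{eq:scalar-duhamel} is equivalent to the $\lambda$-free estimate
\[
\Bigl\|\eta_T(t)\int_0^tF(\tau)\,d\tau\Bigr\|_{H^b(\mathbb{R};\mathcal H)}\le C\,T^{1-b+b'}\|F\|_{H^{b'-1}(\mathbb{R};\mathcal H)},
\]
with $\frac12<b<1$ and $0<b'<\frac12$. This is the classical estimate of Ginibre--Tsutsumi--Velo and Kenig--Ponce--Vega type. Independence of $\lambda$ is then automatic. Vector-valuedness in $\mathcal H$ causes no trouble, since every step below is a scalar Fourier multiplier or kernel bound applied to $\|\widehat F(\tau)\|_{\mathcal H}$.

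To prove the reduced estimate, write
\[
\int_0^tF=\frac{1}{\sqrt{2\pi}}\int\frac{e^{it\tau}-1}{i\tau}\widehat F(\tau)\,d\tau,
\]
and split $\widehat F=\widehat F\mathbf 1_{|\tau|\le 1/T}+\widehat F\mathbf 1_{|\tau|>1/T}$.

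\textbf{Low-frequency part.} Expand $\frac{e^{it\tau}-1}{i\tau}=\sum_{k\ge1}\frac{(it)^k\tau^{k-1}}{k!}$ (up to the factor $i$) and use
\[
\|t^k\eta_T\|_{H^b}\lesssim T^{k}\,T^{1/2-b}\,C^k .
\]
This gives
\[
\sum_k \frac{C^kT^{k+1/2-b}}{k!}\int_{|\tau|\le1/T}|\tau|^{k-1}\|\widehat F\|\,d\tau .
\]
Apply Cauchy--Schwarz with the weight $\langle\tau\rangle^{1-b'}$. The integral $\int_{|\tau|\le 1/T}|\tau|^{2k-2}\langle\tau\rangle^{2-2b'}\,d\tau$ is at most $\lesssim T^{-(2k-1)-(2-2b')}$ when $k\ge1$, since $2k-2+2-2b'>-1$. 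Its square root is $T^{-k+1/2-1+b'}$. Multiplying gives $T^{1-b+b'}$ times a convergent series.

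\textbf{High-frequency part.} Separate the two terms. The term $\eta_T(t)\int_{|\tau|>1/T}\frac{\widehat F(\tau)}{i\tau}d\tau$ is bounded by $\|\eta_T\|_{H^b}\sim T^{1/2-b}$ times
\[
\int_{|\tau|>1/T}|\tau|^{-1}\|\widehat F\|\le\Bigl(\int_{|\tau|>1/T}|\tau|^{-2}\langle\tau\rangle^{2-2b'}\Bigr)^{1/2}\|F\|_{H^{b'-1}}\lesssim T^{1/2+b'-... }
\]
More precisely, the integral $\int_{|\tau|>1/T}|\tau|^{-2b'}$ diverges because $b'<\frac12$. This is the main obstacle, and the remedy is to not split this term crudely. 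Instead, treat $\int_{|\tau|>1/T}\widehat F(\tau)/(i\tau)\,d\tau$ as $\int_{-\infty}^{0}$-type primitive data, i.e. write $\int_0^t F=\int_{-\infty}^t F-\int_{-\infty}^0F$ via the kernel $\mathrm{sgn}$. Then use the standard estimate
\[
\Bigl\|\eta_T\,\mathcal F^{-1}\Bigl(\frac{\widehat F\mathbf 1_{|\tau|>1/T}}{\tau}\Bigr)\Bigr\|_{H^b}\lesssim \|F\|_{H^{b-1}},
\]
obtained from the multiplier $\langle\tau\rangle/|\tau|\lesssim1$ on $|\tau|>1/T\ge1$ together with the product estimate for multiplication by $\eta_T$ in $H^b$ (which costs a uniform constant when $T\le1$). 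Finally, gain $T^{1-b+b'}$ via $\|F\mathbf 1_{|\tau|>1/T}\|_{H^{b-1}}\le T^{b-b'}\|F\|_{H^{b'-1}}$ and a further $T^{1-b}$ from the localization lemma $\|\eta_Tg\|_{H^{b}}\lesssim T^{b_0-b}\|g\|_{H^{b_0}}$ applied with $b_0$ chosen appropriately. The constant term $\eta_T\cdot c$ is handled by the same dyadic bookkeeping in $|\tau|\sim 2^j/T$, summed as a geometric series using $b>\frac12>b'$.

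I expect this high-frequency constant term to be the genuine difficulty. It is the only place where the exponents must be balanced delicately, and where the strict inequalities $b>\frac12$ and $b'<\frac12$ are used.
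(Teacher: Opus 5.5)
Your reduction to $\lambda=0$ via $F=e^{it\lambda}f$ is correct and is exactly the paper's first step; the paper then only cites the standard time-localized Duhamel bound. The gap is the reduced estimate itself. The divergence you found at high frequency is not a technicality that a finer decomposition will cure: for $b\in(\frac12,1)$ and $b'\in(0,\frac12)$ the inequality is false with any constant.

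Since $b'-1<-\frac12$, the weight $\langle\tau\rangle^{2b'-2}$ is integrable, so $H^{b'-1}(\mathbb R)$ contains Dirac masses. Concretely, take $\mathcal H=\mathbb C$, $\lambda=0$, $t_0\in(0,T)$, and $f_n(t)=n\rho(n(t-t_0))$ with $0\le\rho\in C_c^\infty(\mathbb R)$, $\int\rho=1$. Then $|\widehat{f_n}|\le(2\pi)^{-1/2}$ gives $\sup_n\|f_n\|_{H^{b'-1}}<\infty$, while $\mathcal T_0f_n\to\eta_T\mathbf 1_{(t_0,\infty)}$ in $L^2(\mathbb R)$. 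A uniform $H^b$ bound would, via a weakly convergent subsequence, place $\eta_T\mathbf 1_{(t_0,\infty)}$ in $H^b(\mathbb R)\hookrightarrow C(\mathbb R)$. That is impossible, since this function jumps by $\eta_T(t_0)=1$ at $t_0$. Put simply, time integration gains exactly one derivative, so from $H^{b'-1}$ you reach at most $H^{b'}$, and $b'<\frac12<b$.

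The same defect shows up in your own bookkeeping.
\begin{enumerate}
\item In the low-frequency part the powers multiply to $T^{k+\frac12-b}\cdot T^{-k-\frac12+b'}=T^{b'-b}$, not $T^{1-b+b'}$. Even that piece carries a negative power of $T$.
\item In the high-frequency remedy, the inequality $\|F\mathbf 1_{|\tau|>1/T}\|_{H^{b-1}}\le T^{b-b'}\|F\|_{H^{b'-1}}$ runs the wrong way. Since $b-1>b'-1$, the $H^{b-1}$ weight is the larger one at high frequency.
\item The localization lemma gains powers of $T$ only for exponents in $(-\frac12,\frac12)$ and only when lowering regularity. It cannot be used in $H^b$ with $b>\frac12$; note $\|\eta_T\|_{H^b}\sim T^{\frac12-b}$.
\item The dyadic Cauchy--Schwarz bounds for the constant $\int_{|\tau|>1/T}\widehat F(\tau)/(i\tau)\,d\tau$ are of size $(2^j/T)^{\frac12-b'}$ times the local $H^{b'-1}$ norm. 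These grow because $b'<\frac12$, so no geometric series is available.
\end{enumerate}

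The classical Ginibre--Tsutsumi--Velo / Kenig--Ponce--Vega estimate reads $\|\eta_T\int_0^tF\|_{H^b}\le C\,T^{1-b+\theta}\|F\|_{H^\theta}$ for $-\frac12<\theta\le0\le b\le\theta+1$. With $\theta=b'-1$ it requires $b'>\frac12$ and $b\le b'$, and yields only $T^{b'-b}$. A version with a positive gain for $b'\in(0,\frac12)$ uses $\theta=-b'$ and gives $T^{1-b-b'}$, positive when $b+b'<1$.

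The paper's proof cites this estimate outside its range of validity, so it has the same gap. The lemma must be restated before any proof can go through, for instance with $H_\lambda^{-b'}$ on the right, or directly with the exponent $b_0$ of the bilinear estimate and $b_0>b-1$.
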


\begin{proof}
Let $g(t)=e^{it\lambda}\mathcal T_\lambda f(t)$ and $F(t)=e^{it\lambda}f(t)$. Then
\[
g(t)=\eta_T(t)\int_0^t F(\tau)\,d\tau.
\]
By definition of the shifted Sobolev norms,
\[
\|\mathcal T_\lambda f\|_{H_\lambda^{b}(\mathbb{R};\mathcal H)}=\|g\|_{H^{b}(\mathbb{R};\mathcal H)},
\qquad
\|f\|_{H_\lambda^{b'-1}(\mathbb{R};\mathcal H)}=\|F\|_{H^{b'-1}(\mathbb{R};\mathcal H)}.
\]
Hence it suffices to prove the estimate in the case $\lambda=0$, namely
\begin{equation}\label{eq:scalar-duhamel-lam0}
\Bigl\|\eta_T(t)\int_0^t F(\tau)\,d\tau\Bigr\|_{H^{b}(\mathbb{R};\mathcal H)}
\le C\,T^{\,1-b+b'}\|F\|_{H^{b'-1}(\mathbb{R};\mathcal H)}.
\end{equation}

Define the time-localized Duhamel operator
\[
(\mathcal S_T F)(t)=\eta_T(t)\int_0^t F(\tau)\,d\tau.
\]
Since $\mathcal H$ is a Hilbert space, the estimate \eqref{eq:scalar-duhamel-lam0} follows from the scalar case
by applying the bound componentwise with respect to an orthonormal basis and using Plancherel.
The scalar bound is a standard time cutoff estimate in Bourgain-type spaces; see, for instance,
\cite[Lemma~2.1]{TaoBilinearDispersive}.
Therefore,
\[
\|\mathcal S_T F\|_{H^{b}(\mathbb{R};\mathcal H)}
\le C\,T^{\,1-b+b'}\|F\|_{H^{b'-1}(\mathbb{R};\mathcal H)},
\]
with $C$ depending only on $b,b'$ and $\eta$, and independent of $T\in(0,1]$.
This proves \eqref{eq:scalar-duhamel-lam0}, hence \eqref{eq:scalar-duhamel}.
\end{proof}

\begin{lemma}\label{lem:duhamel}
Let $b\in\left(\frac12,1\right)$ and let $b'\in\left(0,\frac12\right)$.
For $T\in(0,1]$ and any $F\in X_T^{s,b'-1}$,
\begin{equation}\label{eq:duhamel}
\left\|\int_0^t e^{-i(t-\tau)D}F(\tau)\,d\tau\right\|_{X_T^{s,b}}
\le C\,T^{\,1-b+b'}\|F\|_{X_T^{s,b'-1}},
\end{equation}
where $C$ depends only on $b,b'$ and $\eta$ in \eqref{eq:cutoff-basic}.
\end{lemma}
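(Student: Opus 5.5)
The plan is to reduce the estimate to Lemma~\ref{lem:scalar-duhamel}, applied fibrewise in the spectral variable $\lambda$. Throughout, $X^{s,b'-1}$ is understood as the completion of $\mathcal S$ under the norm \eqref{eq:Xsb-norm} with exponent $b'-1$.

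\emph{Step 1: extension.} Given $F\in X_T^{s,b'-1}$ and $\varepsilon>0$, choose an extension $\widetilde F\in X^{s,b'-1}$ with $\widetilde F=F$ on $[0,T]$ and
\[
\|\widetilde F\|_{X^{s,b'-1}}\le \|F\|_{X_T^{s,b'-1}}+\varepsilon .
\]
Then define
\[
U(t)=\eta_T(t)\int_0^t e^{-i(t-\tau)D}\widetilde F(\tau)\,d\tau .
\]
Since $\eta_T=1$ on $[0,T]$ and the Duhamel integral for $t\in[0,T]$ only involves $\widetilde F(\tau)$ with $\tau\in[0,T]$, we have $U(t)=\int_0^t e^{-i(t-\tau)D}F(\tau)\,d\tau$ on $[0,T]$. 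By Definition~\ref{def:restriction}, the left side of \eqref{eq:duhamel} is therefore at most $\|U\|_{X^{s,b}}$. It suffices to prove
\[
\|U\|_{X^{s,b}}\le C\,T^{1-b+b'}\|\widetilde F\|_{X^{s,b'-1}}
\]
and then let $\varepsilon\to0$.

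\emph{Step 2: diagonalisation.} First take $\widetilde F\in\mathcal S$. Applying $\mathcal F_D$ in space for each fixed $t$, the functional calculus gives, for $\mu_D$-a.e.\ $\lambda$,
\[
(\mathcal F_D U)(t,\lambda)=\eta_T(t)\int_0^t e^{-i(t-\tau)\lambda}(\mathcal F_D\widetilde F)(\tau,\lambda)\,d\tau=\mathcal T_\lambda f_\lambda(t),
\]
where $f_\lambda(t)=(\mathcal F_D\widetilde F)(t,\lambda)\in\mathcal H_\lambda$. Interchanging $\mathcal F_D$ with the Bochner integral in $\tau$ is justified by the smoothness and compact support of elements of $\mathcal S$. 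By \eqref{eq:Xsb-norm}, for every $\theta$,
\[
\|W\|_{X^{s,\theta}}^2=\int_{\mathbb R}\langle\lambda\rangle^{2s}\,\|(\mathcal F_DW)(\cdot,\lambda)\|^2_{H^\theta_\lambda(\mathbb R;\mathcal H_\lambda)}\,d\mu_D(\lambda).
\]

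\emph{Step 3: fibrewise estimate.} Apply Lemma~\ref{lem:scalar-duhamel} with $\mathcal H=\mathcal H_\lambda$ for each $\lambda$. Its constant is independent of $\lambda$, $T$ and the Hilbert space, so
\[
\|\mathcal T_\lambda f_\lambda\|_{H^b_\lambda}\le C\,T^{1-b+b'}\|f_\lambda\|_{H^{b'-1}_\lambda}.
\]
Squaring, multiplying by $\langle\lambda\rangle^{2s}$ and integrating against $d\mu_D$ yields the bound of Step 1 for $\widetilde F\in\mathcal S$.

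\emph{Step 4: density.} This is the main obstacle, because $X^{s,b'-1}$ is defined as a completion. The map $\widetilde F\mapsto U$ is linear and, by Step 3, bounded from $(\mathcal S,\|\cdot\|_{X^{s,b'-1}})$ to $X^{s,b}$. It therefore extends uniquely by continuity to all of $X^{s,b'-1}$ with the same bound.

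It remains to check that this extension still agrees with the Duhamel integral on $[0,T]$, so that the restriction argument of Step 1 applies. Take $\mathcal S\ni F_n\to\widetilde F$ in $X^{s,b'-1}$. The corresponding $U_n$ converge in $X^{s,b}$, hence in $C([0,T];H_D^s)$ by Lemma~\ref{lem:embed}. On the fibre side, the convergence $f_{n,\lambda}\to f_\lambda$ in $H^{b'-1}_\lambda$ for a.e.\ $\lambda$ (along a subsequence), together with the boundedness of $\mathcal T_\lambda$ from Lemma~\ref{lem:scalar-duhamel}, identifies the limit with $\mathcal T_\lambda f_\lambda$ fibrewise. This identification is also how the Duhamel integral in \eqref{eq:duhamel} is to be interpreted for such rough $F$. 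With the two agreeing, taking the infimum over extensions $\widetilde F$ concludes the proof.
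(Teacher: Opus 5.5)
Your Steps 1--3 are the paper's proof essentially verbatim: extend $F$, insert $\eta_T$, apply Lemma~\ref{lem:scalar-duhamel} on each fibre $\mathcal H_\lambda$, integrate against $\langle\lambda\rangle^{2s}\,d\mu_D$, and take the infimum over extensions. Your Step~4 is extra bookkeeping the paper skips. The problem, which you inherit from the paper, is that the step everything rests on is false in the stated range. So neither argument proves \eqref{eq:duhamel}, and \eqref{eq:duhamel} itself fails.

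Since $b'<\frac12$, the time exponent $b'-1$ lies below $-\frac12$. Then $\langle\tau\rangle^{2b'-2}$ is integrable, so $H^{b'-1}(\mathbb R)$ contains approximate Dirac masses of bounded norm. Take $\phi\in C_c^\infty(-1,1)$ with $\phi\ge0$ and $\int\phi=1$, and set $f_n(t)=n\phi\bigl(n(t-\frac T2)\bigr)$.
\begin{itemize}
\item Since $|\mathcal F_tf_n|\le(2\pi)^{-1/2}$, we get $\|f_n\|_{H^{b'-1}}^2\le\frac1{2\pi}\int_{\mathbb R}\langle\tau\rangle^{2b'-2}\,d\tau$ uniformly in $n$.
\item The function $g_n=\eta_T\int_0^tf_n$ equals $0$ at $\frac T2-\frac1n$ and $1$ at $\frac T2+\frac1n$.
\item Since $H^b(\mathbb R)\hookrightarrow C^{0,b-\frac12}$ for $b\in(\frac12,1)$, this forces $\|g_n\|_{H^b}\gtrsim n^{b-\frac12}\to\infty$.
\end{itemize}
So Lemma~\ref{lem:scalar-duhamel} fails already for $\lambda=0$, $\mathcal H=\mathbb C$ and smooth compactly supported data.

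Tensoring with a free wave gives a counterexample to the statement itself. Take $g$ with compact spectral support and $\|g\|_{H_D^s(G)}=1$, and put $F_n(t)=f_n(t)e^{-itD}g\in\mathcal S$. Then $\|F_n\|_{X^{s,b'-1}}=\|f_n\|_{H^{b'-1}}$ stays bounded. Now let $U$ be any extension of the Duhamel term, and set $a(t)=\langle g,e^{itD}U(t)\rangle_{H_D^s(G)}$. This scalar function equals $\int_0^tf_n$ on $[0,T]$. It also satisfies $\|a\|_{H^b(\mathbb R)}\le\|U\|_{X^{s,b}}$, by Lemma~\ref{lem:Xsb-shift} and Cauchy--Schwarz in $\lambda$. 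Hence the left side of \eqref{eq:duhamel} is at least of order $n^{b-\frac12}$.

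Your Step~4 cannot rescue this. The bound already fails on $\mathcal S$, so there is nothing to extend by continuity. For genuinely rough data such as $\delta(t)\otimes g\in X^{s,b'-1}$, the truncation $\int_0^t$ is not even well defined: symmetric and one-sided mollifications converge to $\delta$ in $H^{b'-1}$ but give different Duhamel limits.

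What is true is the standard time-localized Duhamel estimate with time exponent $\theta\in(-\frac12,0]$:
\[
\Bigl\|\eta_T\int_0^tf\Bigr\|_{H^b}\le C\,T^{1-b+\theta}\|f\|_{H^\theta},\qquad 0\le b\le1+\theta,\ T\le1.
\]
Here the truncation is harmless, because $\mathbf 1_{[0,\infty)}$ is a bounded multiplier on $H^\theta(\mathbb R)$ for $|\theta|<\frac12$. In the paper's parametrization, replace $X_T^{s,b'-1}$ by $X_T^{s,-b'}$ and $T^{1-b+b'}$ by $T^{1-b-b'}$, with $b+b'<1$. With that change, your Steps~1--3 go through word for word.

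The correction propagates downstream. Lemma~\ref{lem:nonlinear} then needs $b_0\ge-b'$, hence $b-b_0<1$. Together with the constraints $b_0\le2s$ and $2b+4s>1$ from Proposition~\ref{prop:bilinear-mixed}, this is solvable exactly when $s>-\frac18$. That is where the threshold actually enters the contraction argument.
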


\begin{proof}
Let $F\in X_T^{s,b'-1}$ and let $F^{\mathrm{ext}}\in X^{s,b'-1}$ be any extension such that
$F^{\mathrm{ext}}(t)=F(t)$ for $t\in[0,T]$.
Set
\[
\Psi(t)=\eta_T(t)\int_0^t e^{-i(t-\tau)D}F^{\mathrm{ext}}(\tau)\,d\tau.
\]
Since $\eta_T(t)=1$ for $t\in[0,T]$, we have
\[
\Psi(t)=\int_0^t e^{-i(t-\tau)D}F(\tau)\,d\tau
\qquad\text{for all }t\in[0,T].
\]
Hence, by Definition~\ref{def:restriction},
\[
\left\|\int_0^t e^{-i(t-\tau)D}F(\tau)\,d\tau\right\|_{X_T^{s,b}}
\le \|\Psi\|_{X^{s,b}}.
\]

Apply $\mathcal{F}_D$ in the space variable. For $\mu_D$-a.e.\ $\lambda$ set
\[
F_\lambda(t)=(\mathcal{F}_D F^{\mathrm{ext}})(t,\lambda)\in\mathcal{H}_\lambda,
\qquad
\Psi_\lambda(t)=(\mathcal{F}_D\Psi)(t,\lambda)
=\eta_T(t)\int_0^t e^{-i(t-\tau)\lambda}F_\lambda(\tau)\,d\tau.
\]
By Lemma~\ref{lem:scalar-duhamel} with $\mathcal H=\mathcal{H}_\lambda$,
\[
\|\Psi_\lambda\|_{H_\lambda^b(\mathbb{R};\mathcal{H}_\lambda)}
\le C\,T^{\,1-b+b'}\|F_\lambda\|_{H_\lambda^{b'-1}(\mathbb{R};\mathcal{H}_\lambda)}.
\]
Multiplying by $\langle\lambda\rangle^{s}$, squaring, and integrating in $\lambda$ with respect to $\mu_D$,
the definition \eqref{eq:Xsb-norm} yields
\[
\|\Psi\|_{X^{s,b}}
\le C\,T^{\,1-b+b'}\|F^{\mathrm{ext}}\|_{X^{s,b'-1}}.
\]
Combining the previous inequalities gives
\[
\left\|\int_0^t e^{-i(t-\tau)D}F(\tau)\,d\tau\right\|_{X_T^{s,b}}
\le C\,T^{\,1-b+b'}\|F^{\mathrm{ext}}\|_{X^{s,b'-1}}.
\]
Taking the infimum over all such extensions $F^{\mathrm{ext}}$ and using Definition~\ref{def:restriction},
we obtain \eqref{eq:duhamel}.
\end{proof}

\begin{lemma}\label{lem:reflection-extension}
For $\kappa\in\{1,-1\}$ define the boundary-adapted reflection operator
\[
(\mathcal E_\kappa z)(x)=
\begin{cases}
\frac{1}{\sqrt2}z(x), & x\ge 0,\\[4pt]
\frac{1}{\sqrt2}\kappa\sigma_3\,z(-x), & x<0.
\end{cases}
\]
Then the following statements hold.
\begin{enumerate}
\item $\mathcal E_\kappa:L^2(\mathbb R_+;\mathbb C^2)\to L^2(\mathbb R;\mathbb C^2)$ is an isometry onto its range.
\item If $z\in\mathrm{Dom}(D_\kappa)$, then $\mathcal E_\kappa z\in H^1(\mathbb R;\mathbb C^2)$ and
\[
D_{\mathbb R}\mathcal E_\kappa z=\mathcal E_\kappa D_\kappa z.
\]
\item For every bounded Borel function $\Phi:\mathbb R\to\mathbb C$,
\[
\Phi(D_{\mathbb R})\,\mathcal E_\kappa=\mathcal E_\kappa\,\Phi(D_\kappa).
\]
In particular,
\[
\|\mathcal E_\kappa f\|_{X^{s,\gamma}(\mathbb R)}=\|f\|_{X^{s,\gamma}(\mathbb R_+)},
\qquad
\|\mathcal E_\kappa f\|_{X_{\pm}^{s,\gamma}(\mathbb R)}=\|f\|_{X_{\pm}^{s,\gamma}(\mathbb R_+)}.
\]
\end{enumerate}
\end{lemma}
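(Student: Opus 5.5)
The plan is to read off from the reflection formula which half-line realization $D_\kappa$ is being intertwined, and then prove the three items in order.

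\textbf{Boundary conditions.} Write $z=(\varphi,\chi)^{\mathsf T}$. For $\kappa=1$ the reflected part is $(\varphi(-x),-\chi(-x))^{\mathsf T}$, so continuity at $x=0$ forces $\chi(0)=0$. For $\kappa=-1$ it is $(-\varphi(-x),\chi(-x))^{\mathsf T}$, which forces $\varphi(0)=0$. Accordingly, I take $\mathrm{Dom}(D_\kappa)$ to be the functions $z\in H^1(\mathbb R_+;\mathbb C^2)$ satisfying the corresponding condition, with $D_\kappa=-i\sigma_1\partial_x+m\sigma_3$ acting on them. These are the self-adjoint half-line Dirac realizations to which the star graph reduces after the unitary change of edge basis.

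\textbf{Items 1 and 2.} Item 1 is immediate. Since $\kappa\sigma_3$ is unitary on $\mathbb C^2$, one has $\|\mathcal E_\kappa z\|_{L^2(\mathbb R)}^2=\tfrac12\|z\|^2+\tfrac12\|z\|^2$.

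For item 2, both halves of $\mathcal E_\kappa z$ lie in $H^1$, and the boundary condition makes the two traces agree at $0$. Hence the glued function lies in $H^1(\mathbb R;\mathbb C^2)$, with derivative computed piecewise. On $x<0$ I compute, using $\sigma_1\sigma_3=-\sigma_3\sigma_1$,
\[
-i\sigma_1\partial_x\bigl(\kappa\sigma_3 z(-x)\bigr)=i\kappa\sigma_1\sigma_3 z'(-x)=\kappa\sigma_3\bigl(-i\sigma_1 z'\bigr)(-x),
\qquad
m\sigma_3\kappa\sigma_3 z(-x)=\kappa\sigma_3\bigl(m\sigma_3 z\bigr)(-x).
\]
This gives $D_{\mathbb R}\mathcal E_\kappa z=\mathcal E_\kappa D_\kappa z$.

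\textbf{Item 3.} Introduce the unitary involution $(R_\kappa w)(x)=\kappa\sigma_3 w(-x)$ on $L^2(\mathbb R;\mathbb C^2)$. The same computation as above shows $R_\kappa D_{\mathbb R}\subset D_{\mathbb R}R_\kappa$, and since $R_\kappa$ is a unitary involution this gives equality. Hence the $+1$ eigenspace $\mathcal K_\kappa=\{w:R_\kappa w=w\}$ reduces $D_{\mathbb R}$.

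Next I identify $\mathrm{Ran}\,\mathcal E_\kappa=\mathcal K_\kappa$, so that $\mathcal E_\kappa:L^2(\mathbb R_+)\to\mathcal K_\kappa$ is unitary. The key point is that the domains match: if $w\in H^1(\mathbb R)\cap\mathcal K_\kappa$, then $z=\sqrt2\,w|_{\mathbb R_+}$ lies in $H^1(\mathbb R_+)$, and the symmetry evaluated at $x=0$ gives $w(0)=\kappa\sigma_3w(0)$, which is exactly the boundary condition. Combined with item 2, this shows that $D_{\mathbb R}|_{\mathcal K_\kappa}$ is unitarily equivalent via $\mathcal E_\kappa$ to $D_\kappa$, including domains. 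Unitary equivalence of self-adjoint operators transports the functional calculus, so $\Phi(D_{\mathbb R})\mathcal E_\kappa=\mathcal E_\kappa\Phi(D_\kappa)$. One can check this first for resolvents $\Phi(\lambda)=(\lambda-\zeta)^{-1}$, then pass to all bounded Borel $\Phi$ by the uniqueness of the functional calculus. In particular, $\Pi_\pm$ and $\langle D\rangle^{s}$ (for $s\le0$) commute with $\mathcal E_\kappa$ in this sense.

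\textbf{Equality of $X^{s,\gamma}$ norms.} For $u\in\mathcal S$, Lemma~\ref{lem:Xsb-shift} (or directly \eqref{eq:Xsb-norm}) expresses the norm as
\[
\|u\|_{X^{s,\gamma}}^2=\int_{\mathbb R}\bigl\|\langle D\rangle^{s}\langle\tau+D\rangle^{\gamma}(\mathcal F_tu)(\tau)\bigr\|_{L^2}^2\,d\tau .
\]
For each $\tau$ the multiplier is a Borel function of $D$. When it is unbounded, I truncate it by $\mathbf 1_{[-n,n]}(D)$ and use monotone convergence. Since $\mathcal E_\kappa$ acts only in $x$, it commutes with $\mathcal F_t$, and it is an isometry. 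The identity therefore holds on $\mathcal S$, and it extends to the completions by density. Because $\Pi_\pm$ also commute with $\mathcal E_\kappa$, the same argument gives the statement for $X^{s,\gamma}_\pm$.

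I expect the main obstacle to be the domain identification in item 3, namely showing that symmetric $H^1$ functions restrict exactly to $\mathrm{Dom}(D_\kappa)$. Without it one has only an intertwining on a core rather than a genuine unitary equivalence, and the functional-calculus transfer would not be justified.
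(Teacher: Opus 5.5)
Your proof is correct. Items 1 and 2 coincide with the paper's argument: trace matching from $z(0)=\kappa\sigma_3 z(0)$, then anticommutation of $\sigma_1,\sigma_3$ on $x<0$. For item 3 you take a different route. You introduce the unitary involution $R_\kappa$ and identify $\mathrm{Ran}\,\mathcal E_\kappa$ with its reducing $+1$ eigenspace. You then check that $H^1(\mathbb R)\cap\mathcal K_\kappa$ corresponds exactly to $\mathrm{Dom}(D_\kappa)$, and conclude that $D_\kappa$ is unitarily equivalent to the part of $D_{\mathbb R}$ in $\mathcal K_\kappa$. The paper instead goes directly from item 2 to resolvents and then to bounded Borel functions, taking self-adjointness of $D_\kappa$ as given.

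Your closing worry, that this shortcut fails without the domain identification, is misplaced. Item 2 gives the intertwining on all of $\mathrm{Dom}(D_\kappa)$, not merely on a core. For $\zeta\notin\mathbb R$ and $z=(D_\kappa-\zeta)^{-1}f$, it yields $(D_{\mathbb R}-\zeta)\mathcal E_\kappa z=\mathcal E_\kappa f$. Applying $(D_{\mathbb R}-\zeta)^{-1}$ gives $\mathcal E_\kappa(D_\kappa-\zeta)^{-1}=(D_{\mathbb R}-\zeta)^{-1}\mathcal E_\kappa$. Stone--Weierstrass and bounded pointwise limits then finish, which is what the paper means.

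What your route buys is that self-adjointness of $D_\kappa$ is no longer an input: you obtain it as the part of a self-adjoint operator in a reducing subspace, together with a genuine unitary equivalence. Your truncation by $\mathbf 1_{[-n,n]}(D)$ also handles a point the paper passes over. The weight $\langle\lambda\rangle^{s}\langle\tau+\lambda\rangle^{\gamma}$ is in general unbounded, yet the paper applies the bounded-$\Phi$ statement to it directly.
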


\begin{proof}
The $L^2$ isometry is immediate from the definition and the identity $|\sigma_3 w|=|w|$.
If $z\in\mathrm{Dom}(D_\kappa)$, then $(I-\kappa\sigma_3)z(0)=0$, equivalently $z(0)=\kappa\sigma_3 z(0)$, so the two traces defining $\mathcal E_\kappa z$ agree at $x=0$.
Hence $\mathcal E_\kappa z\in H^1(\mathbb R;\mathbb C^2)$.
For $x\neq 0$, a direct computation using $\sigma_3\sigma_1=-\sigma_1\sigma_3$ and $\sigma_3^2=I$ gives
\[
D_{\mathbb R}\mathcal E_\kappa z=\mathcal E_\kappa D_\kappa z.
\]
Since $D_{\mathbb R}$ and $D_\kappa$ are self-adjoint and $\mathcal E_\kappa$ is bounded, the intertwining extends from the resolvents to the bounded Borel functional calculus.
Applying this with $\Phi(\lambda)=\langle\lambda\rangle^s\langle\tau+\lambda\rangle^\gamma$ and integrating in $\tau$ yields the Bourgain-space isometries.
Taking $\Phi$ to be the characteristic functions of $(0,\infty)$ and $(-\infty,0)$ gives the corresponding identities for the spectral projections $\Pi_\pm$.
\end{proof}

\begin{proposition}\label{prop:bilinear-mixed}
Let $s>-\frac18$ and let $b\in\left(\frac12,1\right)$.
Let $b_0$ satisfy
\[
-\frac12<b_0\le 0,
\qquad
b_0-2s\le 0,
\qquad
2b+4s>1.
\]
Then for all $u\in X_{+,T}^{s,b}$ and $v\in X_{-,T}^{s,b}$,
\[
\|uv\|_{X_T^{s,b_0}}
\le C_N\,\|u\|_{X_{+,T}^{s,b}}\|v\|_{X_{-,T}^{s,b}},
\]
where $C_N$ depends on $s,b,b_0$ and $N$, but is independent of $T\in(0,1]$.
Here $uv$ denotes the componentwise product in $\mathbb{C}^2$.
\end{proposition}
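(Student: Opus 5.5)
The plan is to reduce the graph estimate to Machihara's one-dimensional mixed-sign bilinear estimate on $\mathbb R$ \cite{Machihara2005}, in four steps.

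\emph{Step 1: decouple the edges.} The Kirchhoff condition \eqref{eq:DK-vertex} is invariant under the permutation group of the edges. We diagonalise it with a unitary $N\times N$ matrix $U$ acting on the edge index, for instance the discrete Fourier matrix. Its first row is $N^{-1/2}(1,\dots,1)$ and its remaining rows are orthogonal to it. In the new edge variables $\widehat\psi_k=\sum_e U_{ke}\psi_e$, the conditions become one half-line condition per $k$:
\begin{itemize}
\item for $k=1$, $\widehat\chi_1(0)=0$;
\item for $k\ge2$, $\widehat\varphi_k(0)=0$.
\end{itemize}
These are of the form $(I-\kappa\sigma_3)z(0)=0$ with $\kappa=1$ and $\kappa=-1$ respectively. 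So $D$ is unitarily equivalent to $D_{1}\oplus D_{-1}^{\oplus(N-1)}$ on $\bigoplus_k L^2(\mathbb R_+;\mathbb C^2)$. Because $U$ acts only on the edge index, it commutes with $e^{-itD}$, with $\Pi_\pm$, and with all multipliers defining the norm \eqref{eq:Xsb-norm}. Hence $\|u\|_{X^{s,b}}^2=\sum_k\|\widehat u_k\|_{X^{s,b}(\mathbb R_+,\kappa_k)}^2$, and likewise for $X_\pm$.

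\emph{Step 2: extend to the line.} Composing with Lemma~\ref{lem:reflection-extension}, each $\widehat u_k$ is sent isometrically to $\mathcal E_{\kappa_k}\widehat u_k\in X_+^{s,b}(\mathbb R)$, and each $\widehat v_k$ to $\mathcal E_{\kappa_k}\widehat v_k\in X_-^{s,b}(\mathbb R)$. The positive and negative spectral parts are preserved by part 3 of that lemma.

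\emph{Step 3: express the product.} The graph product is edgewise, $(uv)_e=u_ev_e$, with $u_e=\sum_k \overline{U_{ke}}\,\widehat u_k$. Thus $u_ev_e$ is a finite sum of $N^2$ terms $c\,\widehat u_k\widehat v_j$ on $\mathbb R_+$, with coefficients $|c|\le1$. Each such product is the restriction to $x>0$ of $2\,(\mathcal E_{\kappa_k}\widehat u_k)(\mathcal E_{\kappa_j}\widehat v_j)$, a componentwise product of a $\Pi_+$ and a $\Pi_-$ function on $\mathbb R$. Machihara's estimate then gives
\[
\|(\mathcal E\widehat u_k)(\mathcal E\widehat v_j)\|_{X^{s,b_0}(\mathbb R)}\lesssim \|\widehat u_k\|_{X^{s,b}}\|\widehat v_j\|_{X^{s,b}}
\]
exactly under the hypotheses $s>-\tfrac18$, $-\tfrac12<b_0\le0$, $b_0\le2s$, $2b+4s>1$. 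Its proof uses the $\mathbb R$ Fourier transform, where the free Dirac symbols are $\pm\langle\xi\rangle_m$ and the mixed-sign resonance function controls the interaction.

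\emph{Step 4: return to the graph and pass to restriction norms.} This step is the main obstacle. We must bound the $X^{s,b_0}(G)$ norm of $uv$ by the line norms of the half-line restrictions. The restriction of a line function to $\mathbb R_+$ is not a spectral multiplier for $D_\kappa$, so it need not commute with the Bourgain weights.
\begin{itemize}
\item First attempt: decompose each line product into its even and odd parts $w=\mathcal E_1 a+\mathcal E_{-1}b$, where $\mathcal E_{\pm1}$ have orthogonal ranges spanning $L^2(\mathbb R)$. Lemma~\ref{lem:reflection-extension} shows that $\mathcal E_{\pm1}$ intertwine $\Phi(D_{\mathbb R})$, so the ranges reduce $D_{\mathbb R}$. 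Hence $\|w\|_{X(\mathbb R)}^2=\|a\|_{X(\kappa=1)}^2+\|b\|_{X(\kappa=-1)}^2$.
\item The half-line function $\sqrt2\,w|_{\mathbb R_+}=a+b$ is then expressed in the $\kappa$-adapted spaces. Reassembling the edge components through $U^{*}$ requires that the combination lands back in $\mathrm{Dom}$-compatible spaces for the graph operator.
\item If this fails, the fallback is to prove directly, via Fourier/Stone's formula, that restriction $\mathbb R\to\mathbb R_+$ followed by the $\kappa$-projection is bounded on $X^{s,b_0}$ for $|s|,|b_0|<\tfrac12$. Here the conditions $b_0\le0$ and $s>-\tfrac18$ keep us in the range where the half-line Sobolev characterisation of $H^s_{D_\kappa}$ holds.
\end{itemize}

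Once this is done, summing the $N^2$ terms produces $C_N$. Finally, taking infima over extensions in Definition~\ref{def:restriction} gives the $T$-independent restriction-space estimate: any extensions $U\in X_+^{s,b}$ and $V\in X_-^{s,b}$ of $u$ and $v$ give an extension $UV$ of $uv$.
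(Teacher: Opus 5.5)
Your Steps 1--3 and your closing remark on extensions follow the paper's proof: its Steps 1--3 and the appeal to Machihara in Step 4. You use the same edge-index unitary with first row $N^{-1/2}(1,\dots,1)$, the same splitting $D\simeq D_{1}\oplus D_{-1}^{\oplus(N-1)}$, and the same reflections $\mathcal E_\kappa$. The gap is Step 4: you identify it as the main obstacle and leave it open, so the proposal does not prove the proposition.

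What is needed is the following bound for the target channel $j$. This channel is reached through the coefficients $c_{jk\ell}$, and its type $\kappa_j$ is unrelated to $\kappa_k,\kappa_\ell$:
\[
\|w|_{\mathbb R_+}\|_{X^{s,b_0}(\mathbb R_+;\kappa_j)}\lesssim\|w\|_{X^{s,b_0}(\mathbb R)},
\qquad
w=(\mathcal E_{\kappa_k}\widehat u_k)(\mathcal E_{\kappa_\ell}\widehat v_\ell).
\]
Your parity route does not give this.
\begin{itemize}
\item Since $(\sigma_3a)(\sigma_3b)=ab$ componentwise, one checks that $w=\mathcal E_{\kappa_k\kappa_\ell}(z_1,0)^{\mathsf T}+\mathcal E_{-\kappa_k\kappa_\ell}(0,z_2)^{\mathsf T}$ with $z=\tfrac1{\sqrt2}\widehat u_k\widehat v_\ell$.
\item So for either value of $\kappa_j$, one component is controlled only in the norm of the opposite boundary type.
\item Since $\mathcal E_\kappa f=\operatorname{sgn}(x)\,\mathcal E_{-\kappa}f$, transferring that component amounts to bounding multiplication by $\operatorname{sgn}(x)$ in $X^{s,b_0}(\mathbb R)$.
\end{itemize}
This is a spatial-cutoff estimate in Bourgain spaces. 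It is not a consequence of the functional calculus, and the displayed bound is false whenever $|s+b_0|\ge\tfrac12$. To see this, take smooth $w$ with $w(t,0)\notin\ker(I-\kappa_j\sigma_3)$. Then $\mathcal E_{\kappa_j}(w|_{\mathbb R_+})$ jumps at $x=0$, so its transform decays only like $|\xi|^{-1}$ in the region $\tau=O(1)$, where the weight is comparable to $\langle\xi\rangle^{s+b_0}$. Duality covers the case $s+b_0\le-\tfrac12$. In particular, your fallback, claimed for all $|s|,|b_0|<\tfrac12$, already fails at the admissible point $s=-\tfrac1{10}$, $b_0=-\tfrac9{20}$.

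The paper does not close this step either. Its Step 4 calls $FG$ an ``admissible extension'' of $\tfrac12\widehat U_k\widehat V_\ell$, i.e.\ it reads $X^{s,b_0}(\mathbb R_+)$ as a spatial restriction space. But Lemma~\ref{lem:reflection-extension} and \eqref{eq:X-decomp-merged}, which Step 5 uses, require the spectral $D_{\kappa_j}$-norm. The isometry of $\mathcal E_\kappa$ only gives (restriction norm) $\le\sqrt2\,$(spectral norm), which is the reverse of what is used. So you have located the real weak point.

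It cannot be repaired in the full stated range. For $N\ge2$, the statement fails at the admissible point $s=1$, $b_0=0$, where $X^{1,0}=L^2_t\,\mathrm{Dom}(D)$:
\begin{itemize}
\item Choose the orthonormal basis real, with $f^{(2)}=\tfrac1{\sqrt2}(1,-1,0,\dots,0)$.
\item Let $u=\Pi_+u$ and $v=\Pi_-v$ be smooth free solutions carried by channel $2$.
\item Then $\sum_e(u_e)_2(0)(v_e)_2(0)=(\widehat u_2)_2(0)\,(\widehat v_2)_2(0)$, which is generically nonzero.
\item Hence $uv$ violates \eqref{eq:DK-vertex}, and $\|uv\|_{X_T^{1,0}}=\infty$.
\end{itemize}
A correct version of your Step 4 must restrict the parameters, at least to $|s+b_0|<\tfrac12$; one admissible $b_0$ suffices for Theorem~\ref{thm:lwp}. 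It must then actually prove the cutoff bound in $X^{s,b_0}(\mathbb R)$ in that range. That estimate is the missing ingredient, in your proposal and in the paper alike.
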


\begin{proof}
Step 1. Let $u\in X_{+,T}^{s,b}$ and $v\in X_{-,T}^{s,b}$ and fix $\varepsilon>0$.
By the definition of the restriction norm, there exist extensions
$U\in X_+^{s,b}$ and $V\in X_-^{s,b}$ such that $U=u$ and $V=v$ on $[0,T]$ and
\[
\|U\|_{X_+^{s,b}}\le \|u\|_{X_{+,T}^{s,b}}+\varepsilon,
\qquad
\|V\|_{X_-^{s,b}}\le \|v\|_{X_{-,T}^{s,b}}+\varepsilon.
\]
Since $UV$ is an extension of $uv$ on $[0,T]$, the restriction norm gives
\[
\|uv\|_{X_T^{s,b_0}}\le \|UV\|_{X^{s,b_0}}.
\]
Thus it suffices to prove the global estimate
\begin{equation}\label{eq:goal-global-merged}
\|UV\|_{X^{s,b_0}}
\le C_N\,\|U\|_{X_+^{s,b}}\|V\|_{X_-^{s,b}},
\end{equation}
with $C_N$ independent of $T$ and of the chosen extensions.
Once \eqref{eq:goal-global-merged} is proved, letting $\varepsilon\to 0$ yields the desired estimate.

Step 2. Write the $N$-star graph as $\mathcal G=\bigsqcup_{e=1}^N \mathbb R_+$ and identify
\[
L^2(\mathcal G;\mathbb C^2)=\bigoplus_{e=1}^N L^2(\mathbb R_+;\mathbb C^2),
\qquad
W=(W_1,\dots,W_N).
\]
Let
\[
\sigma_1=\begin{pmatrix}0&1\\1&0\end{pmatrix},
\qquad
\sigma_3=\begin{pmatrix}1&0\\0&-1\end{pmatrix},
\qquad
D=-i\sigma_1\partial_x+m\sigma_3
\]
act edgewise, and assume the Dirac--Kirchhoff vertex condition
\[
(W_e)_1(0)=(W_{e'})_1(0)\ \text{for all } e,e',
\qquad
\sum_{e=1}^N (W_e)_2(0)=0.
\]
Define $f^{(1)}=\frac{1}{\sqrt N}(1,\dots,1)\in\mathbb C^N$ and choose
$f^{(2)},\dots,f^{(N)}$ so that $\{f^{(j)}\}_{j=1}^N$ is an orthonormal basis of $\mathbb C^N$.
Let $U$ be the unitary $N\times N$ matrix whose $j$-th row is $(f^{(j)})^\ast$, and define
a unitary operator $\mathcal U$ acting only on the edge index by
\[
(\mathcal U W)_j=\sum_{e=1}^N U_{je}\,W_e,
\qquad j=1,\dots,N.
\]
Since $\mathcal U$ is independent of $x$, it commutes with $\partial_x$ and with $m\sigma_3$; hence
\[
(\mathcal U D W)_j
=-i\sigma_1\partial_x (\mathcal U W)_j+m\sigma_3(\mathcal U W)_j.
\]
It remains to identify the transformed boundary condition. Write $w=\mathcal U W$ and
$W_e(0)=(\alpha_e,\beta_e)^{\mathsf T}$. The continuity of the first component implies
$\alpha_1=\cdots=\alpha_N=\alpha$, hence $(\alpha_e)_{e=1}^N=\alpha(1,\dots,1)$, so
\[
(w_1)_1(0)=\sum_{e=1}^N U_{1e}\alpha_e=\sqrt N\,\alpha,
\qquad
(w_j)_1(0)=\sum_{e=1}^N U_{je}\alpha_e=0\quad (j=2,\dots,N),
\]
because $f^{(j)}\perp (1,\dots,1)$ for $j\ge 2$. The Kirchhoff condition gives $\sum_{e=1}^N\beta_e=0$ and thus
\[
(w_1)_2(0)=\sum_{e=1}^N U_{1e}\beta_e=\frac{1}{\sqrt N}\sum_{e=1}^N\beta_e=0.
\]
Therefore the transformed boundary conditions are
\[
(w_1)_2(0)=0,
\qquad
(w_j)_1(0)=0\quad (j=2,\dots,N).
\]
For $\kappa\in\{1,-1\}$ define the half-line operator $D_\kappa$ on $L^2(\mathbb R_+;\mathbb C^2)$ by
\[
D_\kappa=-i\sigma_1\partial_x+m\sigma_3,
\qquad
\mathrm{Dom}(D_\kappa)=\bigl\{z\in H^1(\mathbb R_+;\mathbb C^2): (I-\kappa\sigma_3)z(0)=0\bigr\}.
\]
Since $(I-\sigma_3)(a,b)^{\mathsf T}=(0,2b)^{\mathsf T}$ and $(I+\sigma_3)(a,b)^{\mathsf T}=(2a,0)^{\mathsf T}$,
the above boundary conditions mean $w_1\in\mathrm{Dom}(D_{+1})$ and $w_j\in\mathrm{Dom}(D_{-1})$ for $j\ge 2$.
Hence
\[
\mathcal U D\mathcal U^{-1}
= D_{+1}\oplus D_{-1}^{\oplus (N-1)}
\quad\text{on}\quad
\bigoplus_{j=1}^N L^2(\mathbb R_+;\mathbb C^2).
\]
In particular, if $\widehat W=\mathcal U W=(\widehat W_1,\dots,\widehat W_N)$, then the Bourgain norms
associated with $D$ satisfy, for every $\beta\in\mathbb R$,
\begin{equation}\label{eq:X-decomp-merged}
\|W\|_{X^{s,\beta}}^2=\sum_{j=1}^N \|\widehat W_j\|_{X^{s,\beta}(\mathbb R_+)}^2,
\qquad
\|W\|_{X_{\pm}^{s,\beta}}^2=\sum_{j=1}^N \|\widehat W_j\|_{X_{\pm}^{s,\beta}(\mathbb R_+)}^2,
\end{equation}
because functional calculus for a direct sum is componentwise, and $\mathcal U$ is unitary.

Moreover, since $\mathcal U$ acts only on the edge index, there exist constants $c_{j k \ell}$ such that for all spacetime
functions $F,G$,
\begin{equation}\label{eq:mode-coupling-merged}
(\mathcal U(FG))_j=\sum_{k=1}^N\sum_{\ell=1}^N c_{j k \ell}\,(\mathcal U F)_k\,(\mathcal U G)_\ell,
\qquad j=1,\dots,N,
\end{equation}
with
\[
c_{j k \ell}=\sum_{e=1}^N U_{j e}\,\overline{U_{k e}}\,\overline{U_{\ell e}}.
\]
Define the algebraic constant
\begin{equation}\label{eq:CN-alg-merged}
C_N^{\mathrm{alg}}
=\sup_{\mathbf a,\mathbf b\ne 0}
\frac{\left(\sum_{j=1}^N\left|\sum_{k=1}^N\sum_{\ell=1}^N |c_{j k \ell}|\,a_k b_\ell\right|^2\right)^{1/2}}
{\left(\sum_{k=1}^N|a_k|^2\right)^{1/2}\left(\sum_{\ell=1}^N|b_\ell|^2\right)^{1/2}}<\infty.
\end{equation}
This constant is finite since it is the operator norm of a bilinear map between finite-dimensional Hilbert spaces.
A crude explicit bound follows from $|c_{j k \ell}|\le 1$: indeed,
\[
|c_{j k \ell}|
\le \sum_{e=1}^N |U_{j e}|\,|U_{k e}|\,|U_{\ell e}|
\le \sum_{e=1}^N |U_{j e}|\,|U_{k e}|
\le \Bigl(\sum_{e=1}^N|U_{j e}|^2\Bigr)^{1/2}\Bigl(\sum_{e=1}^N|U_{k e}|^2\Bigr)^{1/2}
=1,
\]
hence for $\mathbf a,\mathbf b\in\mathbb R^N$ with nonnegative entries,
\[
\sum_{k=1}^N\sum_{\ell=1}^N |c_{j k \ell}|\,a_k b_\ell
\le \sum_{k=1}^N a_k\sum_{\ell=1}^N b_\ell
\le \sqrt N\,\|\mathbf a\|_{\ell^2}\,\sqrt N\,\|\mathbf b\|_{\ell^2},
\]
and therefore $\|B(\mathbf a,\mathbf b)\|_{\ell^2}\le N^{3/2}\|\mathbf a\|_{\ell^2}\|\mathbf b\|_{\ell^2}$, so
\[
C_N^{\mathrm{alg}}\le N^{3/2}.
\]

Step 3. We now pass from each half-line channel to the whole line by means of the reflection operator $\mathcal E_\kappa$ from Lemma~\ref{lem:reflection-extension}.
This is the Dirac analogue of an even/odd extension, adapted to the boundary condition encoded by $\kappa$.
In particular,
\begin{equation}\label{eq:E-isom-merged}
\|\mathcal E_\kappa f\|_{X^{s,\gamma}(\mathbb R)}=\|f\|_{X^{s,\gamma}(\mathbb R_+)}
\end{equation}
and
\begin{equation}\label{eq:E-isom-signed-merged}
\|\mathcal E_\kappa f\|_{X_{\pm}^{s,\gamma}(\mathbb R)}=\|f\|_{X_{\pm}^{s,\gamma}(\mathbb R_+)}.
\end{equation}

Step 4. Let $\widehat U=\mathcal U U$ and $\widehat V=\mathcal U V$.
Fix $k,\ell\in\{1,\dots,N\}$ and let $\kappa_k,\kappa_\ell\in\{1,-1\}$ be the boundary types of the $k$-th and $\ell$-th channels
as determined in Step 2.
Set
\[
F=\mathcal E_{\kappa_k}\widehat U_k,
\qquad
G=\mathcal E_{\kappa_\ell}\widehat V_\ell.
\]
Then on $\mathbb R_+$ one has $F=\frac{1}{\sqrt2}\widehat U_k$ and $G=\frac{1}{\sqrt2}\widehat V_\ell$, hence
\[
(FG)|_{\mathbb R_+}=\frac12\,\widehat U_k\widehat V_\ell.
\]
By the definition of the restriction norm on $\mathbb R_+$, the function $FG$ is an admissible extension of
$\frac12\,\widehat U_k\widehat V_\ell$, so
\begin{equation}\label{eq:halfline-restrict-merged}
\|\widehat U_k\widehat V_\ell\|_{X^{s,b_0}(\mathbb R_+)}
\le 2\,\|FG\|_{X^{s,b_0}(\mathbb R)}.
\end{equation}
By \eqref{eq:E-isom-signed-merged}, we have $F\in X_+^{s,b}(\mathbb R)$ and $G\in X_-^{s,b}(\mathbb R)$ with
\[
\|F\|_{X_+^{s,b}(\mathbb R)}=\|\widehat U_k\|_{X_+^{s,b}(\mathbb R_+)},
\qquad
\|G\|_{X_-^{s,b}(\mathbb R)}=\|\widehat V_\ell\|_{X_-^{s,b}(\mathbb R_+)}.
\]
Under the assumptions on $s,b,b_0$, Machihara's mixed-sign bilinear estimate \cite[Proposition 1]{Machihara2005}
yields a constant $C=C(s,b,b_0)$ such that for all
$F\in X_+^{s,b}(\mathbb R)$ and $G\in X_-^{s,b}(\mathbb R)$,
\[
\|FG\|_{X^{s,b_0}(\mathbb R)}
\le C\,\|F\|_{X_+^{s,b}(\mathbb R)}\,\|G\|_{X_-^{s,b}(\mathbb R)}.
\]
Combining with \eqref{eq:halfline-restrict-merged} gives the pairwise half-line bound
\begin{equation}\label{eq:pairwise-merged}
\|\widehat U_k \widehat V_\ell\|_{X^{s,b_0}(\mathbb R_+)}
\le 2C\,\|\widehat U_k\|_{X_+^{s,b}(\mathbb R_+)}\,\|\widehat V_\ell\|_{X_-^{s,b}(\mathbb R_+)}.
\end{equation}

Step 5. By \eqref{eq:mode-coupling-merged} and the triangle inequality,
\[
\|(\mathcal U(UV))_j\|_{X^{s,b_0}(\mathbb R_+)}
\le \sum_{k=1}^N\sum_{\ell=1}^N |c_{j k \ell}|\,
\|\widehat U_k \widehat V_\ell\|_{X^{s,b_0}(\mathbb R_+)}.
\]
Set $a_k=\|\widehat U_k\|_{X_+^{s,b}(\mathbb R_+)}$ and $b_\ell=\|\widehat V_\ell\|_{X_-^{s,b}(\mathbb R_+)}$.
Using \eqref{eq:pairwise-merged},
\[
\|(\mathcal U(UV))_j\|_{X^{s,b_0}(\mathbb R_+)}
\le 2C\,\sum_{k=1}^N\sum_{\ell=1}^N |c_{j k \ell}|\,a_k b_\ell.
\]
Taking the $\ell^2$ norm in $j$ and using \eqref{eq:CN-alg-merged} yields
\[
\left(\sum_{j=1}^N\|(\mathcal U(UV))_j\|_{X^{s,b_0}(\mathbb R_+)}^2\right)^{1/2}
\le 2C\,C_N^{\mathrm{alg}}
\left(\sum_{k=1}^N a_k^2\right)^{1/2}\left(\sum_{\ell=1}^N b_\ell^2\right)^{1/2}.
\]
By \eqref{eq:X-decomp-merged} applied to $W=UV$, $W=U$ and $W=V$,
\[
\|UV\|_{X^{s,b_0}}
\le 2C\,C_N^{\mathrm{alg}}\,\|U\|_{X_+^{s,b}}\|V\|_{X_-^{s,b}}.
\]
This is \eqref{eq:goal-global-merged} with $C_N=2C\,C_N^{\mathrm{alg}}$.
Combining with Step 1 and letting $\varepsilon\to 0$ yields the claimed estimate.
\end{proof}

\begin{lemma}\label{lem:nonlinear}
Let $s>-\frac18$ and let $b\in\left(\frac12,1\right)$.
Assume that $b_0$ satisfies
\[
-\frac12<b_0\le 0,
\qquad
b_0-2s\le 0,
\qquad
2b+4s>1.
\]
Fix $b'\in\left(0,\frac12\right)$ with $b'<b$.
Let $\mathcal N(\psi)=\mathcal B(\Pi_+\psi,\Pi_-\psi)$, where $\mathcal B$ is bilinear and
\[
|\mathcal B(a,b)|\le C_{\mathcal B}|a|\,|b|,
\qquad a,b\in\mathbb C^2.
\]
Then for every $T\in(0,1]$ and every $\psi\in X_T^{s,b}$,
\begin{equation}\label{eq:quad-nlin}
\|\mathcal N(\psi)\|_{X_T^{s,b'-1}}
\le C_{\mathcal B}\,C_N\,\|\Pi_+\psi\|_{X_{+,T}^{s,b}}\;\|\Pi_-\psi\|_{X_{-,T}^{s,b}}
\le C_{\mathcal B}\,C_N\,\|\psi\|_{X_T^{s,b}}^{2}.
\end{equation}
Moreover, for $\psi_1,\psi_2\in X_T^{s,b}$,
\begin{equation}\label{eq:quad-lip}
\|\mathcal N(\psi_1)-\mathcal N(\psi_2)\|_{X_T^{s,b'-1}}
\le C_{\mathcal B}\,C_N\bigl(\|\psi_1\|_{X_T^{s,b}}+\|\psi_2\|_{X_T^{s,b}}\bigr)\|\psi_1-\psi_2\|_{X_T^{s,b}}.
\end{equation}
\end{lemma}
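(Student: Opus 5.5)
The plan is to reduce everything to Proposition~\ref{prop:bilinear-mixed}, after two elementary comparisons of Bourgain norms.

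\emph{Step 1: weight monotonicity.} Since $b'<\frac12$, we have $b'-1<-\frac12<b_0$, and therefore $\langle\tau+\lambda\rangle^{2(b'-1)}\le\langle\tau+\lambda\rangle^{2b_0}$ pointwise. From \eqref{eq:Xsb-norm} this gives $\|F\|_{X^{s,b'-1}}\le\|F\|_{X^{s,b_0}}$ for every $F$. Passing to extensions yields the same inequality for the restriction norms $X_T^{s,b'-1}$ and $X_T^{s,b_0}$.

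\emph{Step 2: the bilinear form.} Write $\mathcal B(a,b)=\sum_{p,q,r}\beta_{pqr}\,a_pb_q\,\mathbf e_r$. Then $\mathcal N(\psi)$ is a finite combination of the products $(\Pi_+\psi)_p(\Pi_-\psi)_q$. Proposition~\ref{prop:bilinear-mixed} is stated for the componentwise product $uv$. To cover $\mathcal B$ I would first extend the proposition to general bilinear $\mathcal B$ by rerunning its proof: the reduction in Steps 2--5 (unitary edge transform, then reflection to the line) is insensitive to which pointwise bilinear map is applied, because $\mathcal U$ acts only on the edge index and commutes with any fixed map on $\mathbb C^2$. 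Machihara's estimate \cite[Proposition 1]{Machihara2005} on the line is stated for a family of quadratic forms in the components, which includes arbitrary bilinear combinations. This gives
\[
\|\mathcal B(U,V)\|_{X^{s,b_0}}\le C_{\mathcal B}C_N\|U\|_{X_+^{s,b}}\|V\|_{X_-^{s,b}},
\]
with the constant $C_{\mathcal B}$ absorbing the coefficients; $C_N$ may be enlarged if necessary. This is the step I expect to be the main obstacle. The pointwise bound $|\mathcal B(a,b)|\le C_{\mathcal B}|a||b|$ does not by itself transfer to $X^{s,b}$ norms, which are not pointwise lattice norms, so one has to argue through the coefficient expansion. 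The honest constant is then $\sum|\beta_{pqr}|$ times $C_N$, renamed $C_{\mathcal B}$.

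\emph{Step 3: restriction and projections.} Take extensions $U\in X_+^{s,b}$ of $\Pi_+\psi$ and $V\in X_-^{s,b}$ of $\Pi_-\psi$ that are nearly optimal. Then $\mathcal B(U,V)$ extends $\mathcal N(\psi)$ on $[0,T]$, and Steps 1--2 give the first inequality in \eqref{eq:quad-nlin}. For the second inequality, note that if $\Psi$ extends $\psi$, then $\Pi_\pm\Psi$ extends $\Pi_\pm\psi$ and lies in $X_\pm^{s,b}$. Moreover $\|\Pi_\pm\Psi\|_{X^{s,b}}\le\|\Psi\|_{X^{s,b}}$, because $\Pi_\pm$ acts as multiplication by an indicator in $\lambda$, exactly as in Lemma~\ref{lem:proj-bdd}. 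Taking the infimum over $\Psi$ gives $\|\Pi_\pm\psi\|_{X_{\pm,T}^{s,b}}\le\|\psi\|_{X_T^{s,b}}$.

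\emph{Step 4: Lipschitz bound.} Bilinearity gives
\[
\mathcal N(\psi_1)-\mathcal N(\psi_2)=\mathcal B\bigl(\Pi_+(\psi_1-\psi_2),\Pi_-\psi_1\bigr)+\mathcal B\bigl(\Pi_+\psi_2,\Pi_-(\psi_1-\psi_2)\bigr).
\]
Applying the estimate of Steps 1--3 to each term and then the triangle inequality in $X_T^{s,b'-1}$ yields \eqref{eq:quad-lip}.
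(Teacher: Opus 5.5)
Your proof follows the paper's proof. The paper also passes from $X_T^{s,b'-1}$ to $X_T^{s,b_0}$ via $b'-1<b_0$, expands $\mathcal B$ into products of components of $\Pi_+\psi$ and $\Pi_-\psi$, invokes Proposition~\ref{prop:bilinear-mixed}, and finishes with $\|\Pi_\pm\psi\|_{X_{\pm,T}^{s,b}}\le\|\psi\|_{X_T^{s,b}}$ and the bilinear splitting of $\mathcal N(\psi_1)-\mathcal N(\psi_2)$. The paper applies the proposition to the individual component products without comment. You rightly note that the proposition is stated only for the componentwise product, and that the $X^{s,b_0}$ norm does not split into spinor components, so a version of it for a general bilinear $\mathcal B$ is needed; its edge-unitary and reflection steps carry over verbatim, provided the cited line estimate holds for that map, and the only cost is a harmless renaming of the constant.
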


\begin{proof}
Since $b'-1<b_0$, one has $\|F\|_{X_T^{s,b'-1}}\le \|F\|_{X_T^{s,b_0}}$.

By $|\mathcal B(a,b)|\le C_{\mathcal B}|a||b|$, each component of $\mathcal N(\psi)$ is a linear combination of products of components of $\Pi_+\psi$ and $\Pi_-\psi$ with coefficients bounded by $C_{\mathcal B}$.
Applying Proposition~\ref{prop:bilinear-mixed} to these products yields
\[
\|\mathcal N(\psi)\|_{X_T^{s,b_0}}
\le C_{\mathcal B}\,C_N\,\|\Pi_+\psi\|_{X_{+,T}^{s,b}}\;\|\Pi_-\psi\|_{X_{-,T}^{s,b}}.
\]
This gives the first inequality in \eqref{eq:quad-nlin}, and the second follows from
$\|\Pi_\pm\psi\|_{X_T^{s,b}}\le \|\psi\|_{X_T^{s,b}}$.

For $\psi_1,\psi_2\in X_T^{s,b}$, bilinearity gives
\[
\mathcal N(\psi_1)-\mathcal N(\psi_2)
=\mathcal B\bigl(\Pi_+(\psi_1-\psi_2),\Pi_-\psi_1\bigr)
+\mathcal B\bigl(\Pi_+\psi_2,\Pi_-(\psi_1-\psi_2)\bigr).
\]
Applying the previous estimate to each term and using $\|\Pi_\pm(\psi_1-\psi_2)\|_{X_T^{s,b}}\le \|\psi_1-\psi_2\|_{X_T^{s,b}}$
yields \eqref{eq:quad-lip}.
\end{proof}

\section{Proof of Local Well-Posedness}

\begin{lemma}\label{lem:mild-weak}
Let $s>-\frac18$ and let $b>\frac12$.
Assume that $\psi\in X_T^{s,b}$ satisfies, for every $t\in[0,T]$,
\[
\psi(t)=e^{-itD}\psi_0+i\int_0^t e^{-i(t-\tau)D}\,\mathcal N(\psi(\tau))\,d\tau
\quad\text{in }H_D^s(G).
\]
Then $\psi$ is a distributional solution of \eqref{eq:NLDE} on $(0,T)$ with values in $H_D^{s-1}(G)$, namely
\[
i\partial_t\psi=D\psi-\mathcal N(\psi)
\quad\text{in }\mathcal D'\bigl((0,T);H_D^{s-1}(G)\bigr).
\]
\end{lemma}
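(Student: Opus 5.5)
The plan is to differentiate the Duhamel formula in the weak sense, working on the spectral side so that everything reduces to scalar identities in $t$ for $\mu_D$-a.e.\ $\lambda$. First I record the regularity of the forcing term. By Lemma~\ref{lem:nonlinear}, $F:=\mathcal N(\psi)\in X_T^{s,b'-1}$, but this is too weak to be a function of time with values in $H_D^{s-1}(G)$. Instead I use the stronger bound from Proposition~\ref{prop:bilinear-mixed}, namely $F\in X_T^{s,b_0}$ with some admissible $b_0\in(-\frac12,0]$ (for instance $b_0=\min(0,2s)$, adjusted if needed). Then I pick an extension $F^{\mathrm{ext}}\in X^{s,b_0}$. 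Since $\langle\tau+\lambda\rangle^{b_0}\ge\langle\tau\rangle^{b_0}\langle\lambda\rangle^{b_0}$ up to a constant (Peetre) and $b_0\ge-1$, I get $F^{\mathrm{ext}}\in H^{b_0}(\mathbb R;H_D^{s+b_0}(G))\subset H^{-1}_{t}(\mathbb R;H_D^{s-1}(G))$. This places $F$ in $\mathcal D'((0,T);H_D^{s-1}(G))$, and likewise for $D\psi$, because $\psi\in C([0,T];H_D^s)$ by Lemma~\ref{lem:embed} and $D:H_D^s\to H_D^{s-1}$ is bounded.

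Next comes the identity itself. I pair both sides with a test function $\phi\in C_c^\infty((0,T))$ and apply $\mathcal F_D$. For a.e.\ $\lambda$ the Duhamel formula reads
\[
\hat\psi(t,\lambda)=e^{-it\lambda}\hat\psi_0(\lambda)+i\int_0^t e^{-i(t-\tau)\lambda}\hat F(\tau,\lambda)\,d\tau .
\]
For each fixed $\lambda$ this is the scalar linear ODE $i\partial_t\hat\psi=\lambda\hat\psi-\hat F$ in $\mathcal D'((0,T);\mathcal H_\lambda)$. This follows from the product rule: $\partial_t$ of $e^{-it\lambda}\int_0^t e^{i\tau\lambda}\hat F\,d\tau$ equals $-i\lambda(\cdot)+\hat F$. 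The step is rigorous once $\tau\mapsto \hat F(\tau,\lambda)$ is locally integrable or, more generally, a distribution whose primitive is continuous. To avoid delicate pointwise-in-$\lambda$ issues, I will instead argue by density. I approximate $F^{\mathrm{ext}}$ in $X^{s,b_0}$ by elements $F_n\in\mathcal S$ and define $\psi_n$ by the Duhamel formula with $F_n$. For smooth $F_n$ with values in $\mathrm{Dom}(\langle D\rangle^M)$, the identity $i\partial_t\psi_n=D\psi_n-F_n$ holds classically in $H_D^{s}$ by the standard semigroup calculus for $e^{-itD}$.

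The remaining step, which I expect to be the main obstacle, is passing to the limit. By Lemma~\ref{lem:duhamel} (together with the embedding $X^{s,b_0}\hookrightarrow X^{s,b'-1}$) and Lemma~\ref{lem:embed}, $\psi_n\to\psi$ in $C([0,T];H_D^s)$, so $\langle\psi_n,\partial_t\phi\rangle$ and $D\psi_n$ converge in the required sense. The convergence $F_n\to F$ in $\mathcal D'((0,T);H_D^{s-1})$ follows from the embedding into $H^{-1}_t H_D^{s-1}$ established above. Identifying the limit of $\psi_n$ with $\psi$ on $[0,T]$ requires that the Duhamel term depends only on $F|_{[0,T]}$, which is exactly how Lemma~\ref{lem:duhamel} was set up. Putting these together gives $i\partial_t\psi=D\psi-\mathcal N(\psi)$ in $\mathcal D'((0,T);H_D^{s-1}(G))$.
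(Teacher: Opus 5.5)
Your proposal is essentially the paper's argument: extend the forcing, approximate the extension by elements of $\mathcal S$, use the classical identity for the resulting Duhamel approximants, and pass to the limit via Lemma~\ref{lem:duhamel} and Lemma~\ref{lem:embed}. Two small corrections and a comment follow. First, the identity for $\psi_n$ holds classically in $H_D^{s-1}(G)$, not in $H_D^s(G)$, since $\psi_0$ lies only in $H_D^s$. Second, your Peetre argument already gives $X^{s,b'-1}\hookrightarrow H^{-1}_t(\mathbb R;H_D^{s-1}(G))$, because $b'-1>-1$, so the upgrade to $X^{s,b_0}$ is unnecessary. That embedding is nonetheless useful: it makes rigorous the convergence of the forcing term, which the paper only asserts.
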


\begin{proof}
Since $b>\frac12$, Lemma~\ref{lem:embed} yields $\psi\in C([0,T];H_D^s(G))$.
Fix $b'\in(0,\frac12)$ with $b'<b$ and choose $b_0$ as in Proposition~\ref{prop:bilinear-mixed}.
By Lemma~\ref{lem:nonlinear} we have $F=\mathcal N(\psi)\in X_T^{s,b'-1}$.

Let $\zeta\in C_c^\infty(0,T)$ and $\phi\in\mathrm{Dom}(D)$.
We show that
\begin{equation}\label{eq:weak-form-mild}
\int_0^T \bigl\langle \psi(t),-i\zeta'(t)\phi-\zeta(t)D\phi\bigr\rangle_{L^2(G)}\,dt
+\int_0^T \langle F(t),\zeta(t)\phi\rangle_{L^2(G)}\,dt=0.
\end{equation}
We split $\psi=e^{-itD}\psi_0+iW$, where
\[
W(t)=\int_0^t e^{-i(t-\tau)D}F(\tau)\,d\tau.
\]

For $\psi_0\in L^2(G;\mathbb C^2)$ one checks that
$t\mapsto \langle e^{-itD}\psi_0,\phi\rangle_{L^2(G)}$ is $C^1$ and
\[
\frac{d}{dt}\langle e^{-itD}\psi_0,\phi\rangle_{L^2}
=\langle e^{-itD}\psi_0,-iD\phi\rangle_{L^2}.
\]
Since $\zeta\in C_c^\infty(0,T)$, integration by parts yields
\begin{equation}\label{eq:free-weak-new}
\int_0^T \bigl\langle e^{-itD}\psi_0,\,-i\zeta'(t)\phi-\zeta(t)D\phi\bigr\rangle_{L^2}\,dt=0.
\end{equation}
By density of $L^2(G;\mathbb C^2)$ in $H_D^s(G)$ and boundedness of $e^{-itD}$ on $H_D^s(G)$,
\eqref{eq:free-weak-new} extends to all $\psi_0\in H_D^s(G)$.

Choose an extension $F^{\mathrm{ext}}\in X^{s,b'-1}$ of $F$ to $\mathbb R$.
Since the dense core $\mathcal S=\bigcup_{M\in\mathbb N} C_c^\infty(\mathbb R;\mathrm{Dom}(\langle D\rangle^M))$ is dense in $X^{s,b'-1}$,
there exist $G_n\in\mathcal S$ such that $G_n\to F^{\mathrm{ext}}$ in $X^{s,b'-1}$.
Set $F_n=G_n|_{[0,T]}$ and
\[
W_n(t)=\int_0^t e^{-i(t-\tau)D}F_n(\tau)\,d\tau.
\]
Then $F_n\to F$ in $X_T^{s,b'-1}$ by definition of the restriction norm.
Moreover, each $G_n$ is smooth in $t$ with values in $\mathrm{Dom}(D)$, so differentiation under the integral sign gives
$W_n\in C^\infty([0,T];\mathrm{Dom}(D))$ and, for every $t\in(0,T)$,
\[
\frac{d}{dt}\langle W_n(t),\phi\rangle_{L^2}
=\langle F_n(t),\phi\rangle_{L^2}+\langle W_n(t),-iD\phi\rangle_{L^2}.
\]
Multiplying by $\zeta(t)$ and integrating by parts in $t$ gives
\begin{equation}\label{eq:duh-weak-new}
\int_0^T \bigl\langle iW_n(t),-i\zeta'(t)\phi-\zeta(t)D\phi\bigr\rangle_{L^2}\,dt
+\int_0^T \langle F_n(t),\zeta(t)\phi\rangle_{L^2}\,dt=0.
\end{equation}
By Lemma~\ref{lem:duhamel}, $W_n\to W$ in $X_T^{s,b}$, hence $W_n\to W$ in $C([0,T];H_D^s(G))$ by Lemma~\ref{lem:embed},
so we can pass to the limit in the first integral in \eqref{eq:duh-weak-new}.
Moreover, since $\zeta\phi$ is smooth and compactly supported in time, the second term converges to
$\int_0^T \langle F(t),\zeta(t)\phi\rangle_{L^2}\,dt$ in the distributional sense.
Letting $n\to\infty$ in \eqref{eq:duh-weak-new} yields
\begin{equation}\label{eq:duh-weak-limit}
\int_0^T \bigl\langle iW(t),-i\zeta'(t)\phi-\zeta(t)D\phi\bigr\rangle_{L^2}\,dt
+\int_0^T \langle F(t),\zeta(t)\phi\rangle_{L^2}\,dt=0.
\end{equation}

Adding \eqref{eq:free-weak-new} and \eqref{eq:duh-weak-limit} and using $\psi=e^{-itD}\psi_0+iW$ gives \eqref{eq:weak-form-mild},
which is exactly the distributional formulation of $i\partial_t\psi=D\psi-\mathcal N(\psi)$ in
$\mathcal D'((0,T);H_D^{s-1}(G))$.
\end{proof}

\begin{proof}[Proof of Theorem~\ref{thm:lwp}]
Let $s>-\frac18$. Fix $b\in\left(\frac12,1\right)$ and choose $b'\in\left(0,\frac12\right)$ with $b'<b$.
Let $b_0$ satisfy the assumptions of Proposition~\ref{prop:bilinear-mixed}.
For $T\in(0,1]$ define, for $\psi\in X_T^{s,b}$,
\[
\mathcal T(\psi)(t)=e^{-itD}\psi_0+i\int_0^t e^{-i(t-\tau)D}\,\mathcal N(\psi(\tau))\,d\tau.
\]
By Lemma~\ref{lem:free} and Lemma~\ref{lem:duhamel}, there exists $C>0$ such that
\begin{equation}\label{eq:T-bound}
\|\mathcal T(\psi)\|_{X_T^{s,b}}
\le C\|\psi_0\|_{H_D^s(G)}+C\,T^{1-b+b'}\|\mathcal N(\psi)\|_{X_T^{s,b'-1}}.
\end{equation}
By Lemma~\ref{lem:nonlinear},
\begin{equation}\label{eq:N-bound}
\|\mathcal N(\psi)\|_{X_T^{s,b'-1}}
\le C_{\mathcal B}\,C_N\,\|\psi\|_{X_T^{s,b}}^{2}.
\end{equation}
Set
\[
R=2C\|\psi_0\|_{H_D^s(G)},
\qquad
B_R=\{\psi\in X_T^{s,b}:\ \|\psi\|_{X_T^{s,b}}\le R\},
\qquad
C_\ast=C\,C_{\mathcal B}\,C_N.
\]
Combining \eqref{eq:T-bound}--\eqref{eq:N-bound}, for $\psi\in B_R$ we get
\[
\|\mathcal T(\psi)\|_{X_T^{s,b}}
\le \frac{R}{2}+C_\ast\,T^{1-b+b'}R^{2}.
\]
Choose $T\in(0,1]$ such that
\[
C_\ast\,T^{1-b+b'}R\le \frac14.
\]
Then $C_\ast\,T^{1-b+b'}R^{2}\le \frac{R}{4}$ and hence $\mathcal T$ maps $B_R$ into itself.

For $\psi_1,\psi_2\in B_R$, Lemma~\ref{lem:duhamel} and \eqref{eq:quad-lip} in Lemma~\ref{lem:nonlinear} yield
\[
\begin{aligned}
 \|\mathcal T(\psi_1)-\mathcal T(\psi_2)\|_{X_T^{s,b}}
&\le C\,T^{1-b+b'}\|\mathcal N(\psi_1)-\mathcal N(\psi_2)\|_{X_T^{s,b'-1}}\\
&\le C_\ast\,T^{1-b+b'}(\|\psi_1\|_{X_T^{s,b}}+\|\psi_2\|_{X_T^{s,b}})\|\psi_1-\psi_2\|_{X_T^{s,b}}.
\end{aligned}
\]
Since $\psi_1,\psi_2\in B_R$, this gives
\[
\|\mathcal T(\psi_1)-\mathcal T(\psi_2)\|_{X_T^{s,b}}
\le 2C_\ast\,T^{1-b+b'}R\,\|\psi_1-\psi_2\|_{X_T^{s,b}}
\le \frac12\,\|\psi_1-\psi_2\|_{X_T^{s,b}}.
\]
Thus $\mathcal T$ is a contraction on $B_R$, and there exists a unique fixed point $\psi\in B_R$.

By Lemma~\ref{lem:embed}, $\psi\in C([0,T];H_D^s(G))$.
Moreover, the map
\[
\Phi(t)=e^{-itD}\psi_0+i\int_0^t e^{-i(t-\tau)D}\,\mathcal N(\psi(\tau))\,d\tau
\]
belongs to $X_T^{s,b}\hookrightarrow C([0,T];H_D^s(G))$ by Lemma~\ref{lem:free}, Lemma~\ref{lem:duhamel}, and Lemma~\ref{lem:nonlinear}.
Since $\psi=\mathcal T(\psi)$ in $X_T^{s,b}$ and both sides are continuous $H_D^s(G)$-valued functions on $[0,T]$,
it follows that $\psi(t)=\Phi(t)$ in $H_D^s(G)$ for every $t\in[0,T]$.
Therefore Lemma~\ref{lem:mild-weak} applies and shows that $\psi$ is a distributional solution of \eqref{eq:NLDE} on $(0,T)$.

Uniqueness in $X_T^{s,b}$ implies uniqueness in $C([0,T];H_D^s(G))$ since $X_T^{s,b}\hookrightarrow C([0,T];H_D^s(G))$.
Local Lipschitz dependence on the data follows by applying the same contraction estimate to the difference of the fixed points
corresponding to $\psi_0$ and $\widetilde\psi_0$ on the common time interval determined by
$\max\{\|\psi_0\|_{H_D^s(G)},\|\widetilde\psi_0\|_{H_D^s(G)}\}$.

Let $[0,T^\ast)$ be the maximal forward lifespan of $\psi$ in $C([0,T);H_D^s(G))$ and assume that
\[
\sup_{t\uparrow T^\ast}\|\psi(t)\|_{H_D^s(G)}<\infty.
\]
Set
\[
M=\sup_{t\in[0,T^\ast)}\|\psi(t)\|_{H_D^s(G)}<\infty.
\]
Fix $t_0\in[0,T^\ast)$ and define $u(t)=\psi(t_0+t)$ for $t\ge 0$. Then $u$ solves \eqref{eq:NLDE} with initial datum
$u(0)=\psi(t_0)$.
The above contraction argument applied to $u(0)$ yields a local solution on an interval $[0,\delta]$ with
\[
\delta=\delta\bigl(\|\psi(t_0)\|_{H_D^s(G)}\bigr).
\]
Since $\|\psi(t_0)\|_{H_D^s(G)}\le M$, there exists $\delta_0=\delta_0(M)>0$ such that one can take $\delta\ge \delta_0$
for every $t_0\in[0,T^\ast)$.
Choosing $t_0$ so that $T^\ast-t_0<\delta_0$, we obtain a solution on $[t_0,t_0+\delta_0]$, hence on an interval strictly beyond $T^\ast$.
This contradicts the maximality of $T^\ast$, and therefore $\psi$ extends past $T^\ast$.
\end{proof}

\section*{Acknowledgments}
We would like to thank the anonymous referee for his/her careful readings of our manuscript and the useful comments. 

\medskip
{\bf Funding:} This work is supported by National Natural Science Foundation of China (12301145, 12261107, 12561020) and Yunnan Fundamental Research Projects (202301AU070144, 202401AU070123).

\medskip
{\bf Author Contributions:} All the authors wrote the main manuscript text together and these authors contributed equally to this work.

\medskip
{\bf Data availability:}  Data sharing is not applicable to this article as no new data were created or analyzed in this study.

\medskip
{\bf Conflict of Interests:} The authors declare that there is no conflict of interest.

\bibliographystyle{plain}
\bibliography{reference}
\end{document}